\documentclass[a4paper,oneside,english]{amsart}
\usepackage[T1]{fontenc}
\usepackage[latin9]{inputenc}
\usepackage{mathrsfs}
\usepackage{amstext}
\usepackage{amsthm}
\usepackage{amssymb}
\usepackage{stmaryrd}
\usepackage[all]{xy}

\makeatletter

\pdfpageheight\paperheight
\pdfpagewidth\paperwidth

\numberwithin{equation}{section}
\numberwithin{figure}{section}
\theoremstyle{plain}
\newtheorem{thm}{\protect\theoremname}
  \theoremstyle{definition}
  \newtheorem{defn}[thm]{\protect\definitionname}
  \theoremstyle{remark}
  \newtheorem{rem}[thm]{\protect\remarkname}
  \theoremstyle{plain}
  \newtheorem{prop}[thm]{\protect\propositionname}
  \theoremstyle{plain}
  \newtheorem{lem}[thm]{\protect\lemmaname}

%%%%%%%%%%%%%%%%%%%%%%%%%%%%%% User specified LaTeX commands.
\usepackage[dvipsnames,svgnames,x11names,hyperref]{xcolor}
\usepackage[colorlinks=true,linkcolor=Dark Red, citecolor=Dark Green,linktoc=all]{hyperref}
\usepackage{lmodern}
\renewcommand*{\epsilon}{\varepsilon}
\usepackage{amssymb}
\usepackage{amsfonts}
\usepackage{egothic}
\usepackage[T1]{fontenc}
\usepackage{url}
\usepackage{amsthm}
\usepackage{aliascnt}
\usepackage{lipsum}
\usepackage{mathrsfs}
\usepackage{esint}
\usepackage{url}
\usepackage{amsmath}
\usepackage{dsfont}
\usepackage{bbm}
\usepackage{pdflscape}
\usepackage{bbm}
\usepackage{bussproofs}
\newlength{\lhs} 
\newlength{\rhs} 

\makeatother

\usepackage{babel}
  \providecommand{\definitionname}{Definition}
  \providecommand{\lemmaname}{Lemma}
  \providecommand{\propositionname}{Proposition}
  \providecommand{\remarkname}{Remark}
\providecommand{\theoremname}{Theorem}

\begin{document}

\title{Yoneda Structures and KZ Doctrines}
\begin{abstract}
In this paper we strengthen the relationship between Yoneda structures
and KZ doctrines by showing that for any locally fully faithful KZ
doctrine, with the notion of admissibility as defined by Bunge and
Funk, all of the Yoneda structure axioms apart from the right ideal
property are automatic.
\end{abstract}

\author{Charles Walker}

\address{Department of Mathematics, Macquarie University, NSW 2109, Australia}

\email{charles.walker1@mq.edu.au}

\subjclass[2000]{18A35, 18C15, 18D05}

\maketitle

\section{Introduction}

The majority of this paper concerns Kock-Z\"{o}berlein doctrines,
which were introduced by Kock \cite{kock1972} and Z\"{o}berlein
\cite{zober1976}. These KZ doctrines capture the free cocompletion
under a suitable class of colimits $\Phi$, with a canonical example
being the free small cocompletion KZ doctrine on locally small categories.
On the other hand, Yoneda structures as introduced by Street and Walters
\cite{yonedastructures} capture the presheaf construction, with the
canonical example being the Yoneda structure on (not necessarily locally
small) categories, whose basic data is the Yoneda embedding $\mathcal{A}\to\left[\mathcal{A}^{\textnormal{op}},\mathbf{Set}\right]$
for each locally small category $\mathcal{A}$. When $\mathcal{A}$
is small this coincides with the usual free small cocompletion, but
not in general. In this paper we prove a theorem tightening the relationship
between these two notions, not just in the context of this example,
but in general.

A key feature of a Yoneda structure (which is not present in the definition
of a KZ doctrine) is a class of 1-cells called \emph{admissible 1-cells}.
In the setting of the usual Yoneda structure on $\mathbf{CAT}$, a
1-cell (that is a functor) $L\colon\mathcal{A}\to\mathcal{B}$ is
called admissible when the corresponding functor $\mathcal{B}\left(L-,-\right)\colon\mathcal{B}\to\left[\mathcal{A}^{\textnormal{op}},\mathbf{SET}\right]$
factors through the inclusion of $\left[\mathcal{A}^{\textnormal{op}},\mbox{\ensuremath{\mathbf{Set}}}\right]$
into $\left[\mathcal{A}^{\textnormal{op}},\mathbf{SET}\right]$.

In order to compare Yoneda structures with KZ doctrines, we will also
need a notion of admissibility in the setting of a KZ doctrine. Fortunately,
such a notion of admissibility has already been introduced by Bunge
and Funk \cite{bungefunk}. In the case of the free small cocompletion
KZ doctrine $P$ on locally small categories, these admissible 1-cells,
which we refer to as $P$-admissible, are those functors $L\colon\mathcal{A}\to\mathcal{B}$
for which the corresponding functor $\mathcal{B}\left(L-,-\right)\colon\mathcal{B}\to\left[\mathcal{A}^{\textnormal{op}},\mbox{\ensuremath{\mathbf{Set}}}\right]$
factors through the inclusion of $P\mathcal{A}$ into $\left[\mathcal{A}^{\textnormal{op}},\mbox{\ensuremath{\mathbf{Set}}}\right]$.

The main result of this paper; Theorem \ref{admabll}, shows that
given a locally fully faithful KZ doctrine $P$ on a 2-category $\mathscr{C}$,
on defining the admissible maps to be those of Bunge and Funk, one
defines all the data and axioms for a Yoneda structure except for
the ``right ideal property'' which asks that the class of admissible
1-cells $\mathbf{I}$ satisfies the property that for each $L\in\mathbf{I}$
we have $L\cdot F\in\mathbf{I}$ for all $F$ such that the composite
$L\cdot F$ is defined.

\section{Background\label{background}}

In this section we will recall the notion of a KZ doctrine $P$ as
well as the notions of left extensions and left liftings, as these
will be needed to describe Yoneda structures, and to discuss their
relationship with KZ doctrines.
\begin{defn}
Suppose we are given a 2-cell $\eta\colon I\to R\cdot L$ as in the
left diagram
\[
\xymatrix@=1em{\mathcal{B}\ar[rr]^{R} &  & \mathcal{C}\ar@{}[ld]|-{\stackrel{\eta}{\Longleftarrow}} &  &  &  & \mathcal{B}\ar[rr]^{R}\ar@/^{1.5pc}/[rr]_{\Uparrow\sigma}^{M} &  & \mathcal{C}\ar@{}[ld]|-{\stackrel{\eta}{\Longleftarrow}}\\
 & \; &  &  &  &  &  & \;\\
 &  & \mathcal{A}\ar[uu]_{I}\ar[uull]^{L} &  &  &  &  &  & \mathcal{A}\ar[uu]_{I}\ar[uull]^{L}
}
\]
in a 2-category $\mathscr{C}$. We say that $R$ is exhibited as a
\emph{left extension} of $I$ along $L$ by the 2-cell $\eta$ when
pasting 2-cells $\sigma:R\to M$ with the 2-cell $\eta:I\to R\cdot L$
as in the right diagram defines a bijection between 2-cells $R\to M$
and 2-cells $I\to M\cdot L$. Moreover, we say such a left extension
is \emph{respected} by a 1-cell $E\colon\mathcal{C}\to\mathcal{D}$
when the whiskering of $\eta$ by $E$ given by the following pasting
diagram 
\[
\xymatrix@=1em{\mathcal{B}\ar[rr]^{R} &  & \mathcal{C}\ar@{}[ld]|-{\stackrel{\eta}{\Longleftarrow}}\ar[rr]^{E}\ar@{}[rd]|-{\stackrel{\textnormal{id}}{\Longleftarrow}} &  & \mathcal{D}\\
 & \; &  & \;\\
 &  & \mathcal{A}\ar[uu]_{I}\ar[uull]^{L}\ar[uurr]_{E\cdot I}
}
\]
exhibits $E\cdot R$ as a left extension of $E\cdot I$ along $L$. 

Dually, we have the notion of a left lifting. We say a 2-cell $\eta\colon I\to R\cdot L$
exhibits $L$ as a \emph{left lifting} of $I$ through $R$ when pasting
2-cells $\delta\colon L\to K$ with the 2-cell $\eta\colon I\to R\cdot L$
defines a bijection between 2-cells $L\to K$ and 2-cells $I\to R\cdot K$.
We call such a lifting \emph{absolute }if for any 1-cell $F\colon\mathcal{X}\to\mathcal{A}$
the whiskering of $\eta$ by $F$ given by the following pasting diagram
\[
\xymatrix@=1em{\mathcal{B}\ar[rr]^{R} &  & \mathcal{C}\ar@{}[ld]|-{\stackrel{\eta}{\Longleftarrow}}\\
 & \;\\
\ar@{}[rr]|-{\quad\stackrel{\textnormal{id}}{\Longleftarrow}} &  & \mathcal{A}\ar[uu]_{I}\ar[uull]^{L}\\
\\
 &  & \mathcal{X}\ar[uu]_{F}\ar@/^{1pc}/[uuuull]^{L\cdot F}
}
\]
exhibits $L\cdot F$ as a left lifting of $I\cdot F$ through $R$.
\end{defn}
There are quite a few different characterizations of KZ doctrines,
for example those due to Kelly-Lack or Kock \cite{lack1997,kock1972}.
For the purposes of relating KZ doctrines to Yoneda structures, it
will be easiest to work with the following characterization given
by Marmolejo and Wood \cite{marm2012} in terms of left Kan extensions.
\begin{defn}
\label{defkzdoctrine}\cite[Definition 3.1]{marm2012} A \emph{KZ
doctrine $\left(P,y\right)$ }on a 2-category $\mathscr{C}$ consists
of 

(i) An assignation on objects $P\colon\textnormal{ob}\mathscr{C}\to\textnormal{ob}\mathscr{C}$;

(ii) For every object $\mathcal{A}\in\mathscr{C}$, a 1-cell $y_{\mathcal{A}}\colon\mathcal{A}\to P\mathcal{A}$;

(iii) For every pair of objects $\mathcal{A}\text{ and }\mathcal{B}$
and 1-cell $F\colon\mathcal{A}\to P\mathcal{B}$, a left extension
\begin{equation}
\xymatrix@=1em{P\mathcal{A}\ar@{->}[rr]^{\overline{F}}\ar@{}[rd]|-{\stackrel{c_{F}}{\Longleftarrow}} &  & P\mathcal{B}\\
 & \;\\
\mathcal{A}\ar[rruu]_{F}\ar[uu]^{y_{\mathcal{A}}}
}
\label{diag:2.1}
\end{equation}
of $F$ along $y_{\mathcal{A}}$ exhibited by an isomorphism $c_{F}$
as above. 

Moreover, we require that:

(a) For every object $\mathcal{A}\in\mathscr{C}$, the left extension
of $y_{\mathcal{A}}$ as in \ref{diag:2.1} is given by
\[
\xymatrix@=1em{P\mathcal{A}\ar@{->}[rr]^{\textnormal{id}_{P\mathcal{A}}} &  & P\mathcal{A}\ar@{}[ld]|-{\stackrel{\textnormal{id}}{\Longleftarrow}}\\
 & \;\\
 &  & \mathcal{A}\ar[uu]_{y_{\mathcal{A}}}\ar[ulul]^{y_{\mathcal{A}}}
}
\]

Note that this means $c_{y_{\mathcal{A}}}$ is equal to the identity
2-cell on $y_{\mathcal{A}}$.

(b) For any 1-cell $G\colon\mathcal{B}\to P\mathcal{C}$, the corresponding
left extension $\overline{G}\colon P\mathcal{B}\to P\mathcal{C}$
respects the left extension $\overline{F}$ in \ref{diag:2.1}.\end{defn}
\begin{rem}
This definition is equivalent (in the sense that each gives rise to
the other) to the well known algebraic definition, which we refer
to as a KZ pseudomonad \cite{marm2012,marm1997}. A \emph{KZ pseudomonad
$\left(P,y,\mu\right)$ }on a 2-category $\mathscr{C}$ is taken to
be a pseudomonad $\left(P,y,\mu\right)$ on $\mathscr{C}$ equipped
with a modification $\theta\colon Py\to yP$ satisfying two coherence
axioms \cite{kock1972}.
\end{rem}
Just as KZ doctrines may be defined algebraically or in terms of left
extensions, one may also define pseudo algebras for these KZ doctrines
algebraically or in terms of left extensions. 

The following definitions in terms of left extensions are equivalent
to the usual notions of pseudo $P$-algebra and $P$-homomorphism,
in the sense that we have an equivalence between the two resulting
2-categories of pseudo $P$-algebras arising from the two different
definitions \cite[Theorems 5.1,5.2]{marm2012}.
\begin{defn}
[\cite{marm2012}] Given a KZ doctrine $\left(P,y\right)$ on a 2-category
$\mathscr{C}$, we say an object $\mathcal{X}\in\mathscr{C}$ is \emph{$P$-cocomplete}
if for every $G\colon\mathcal{B}\to\mathcal{X}$ 
\[
\xymatrix@=1em{P\mathcal{B}\ar@{->}[rr]^{\overline{G}}\ar@{}[rd]|-{\stackrel{c_{G}}{\Longleftarrow}} &  & \mathcal{X} &  &  & P\mathcal{A}\ar@{->}[rr]^{\overline{F}}\ar@{}[rd]|-{\stackrel{c_{F}}{\Longleftarrow}} &  & P\mathcal{B}\ar[rr]^{\overline{G}} &  & \mathcal{X}\\
 & \; &  &  &  &  & \;\\
\mathcal{B}\ar[rruu]_{G}\ar[uu]^{y_{\mathcal{B}}} &  &  &  &  & \mathcal{A}\ar[rruu]_{F}\ar[uu]^{y_{\mathcal{A}}}
}
\]
there exists a left extension $\overline{G}$ as on the left exhibited
by an isomorphism $c_{G}$, and moreover this left extension respects
the left extensions $\overline{F}$ as in the diagram on the right.
We say a 1-cell $E\colon\mathcal{X}\to\mathcal{Y}$ between $P$-cocomplete
objects $\mathcal{X}$ and $\mathcal{Y}$ is a \emph{$P$-homomorphism
}when it respects all left extensions along $y_{\mathcal{B}}$ into
$\mathcal{X}$ for every object $\mathcal{B}$. \end{defn}
\begin{rem}
It is clear that $P\mathcal{A}$ is $P$-cocomplete for every $\mathcal{A}\in\mathscr{C}$. 
\end{rem}
The relationship between $P$-cocompleteness and admitting a pseudo
$P$-algebra structure is as below. 
\begin{prop}
\label{Pcocompleteequiv}Given a KZ doctrine $\left(P,y\right)$ on
a 2-category $\mathscr{C}$ and an object $\mathcal{X}\in\mathscr{C}$,
the following are equivalent:

(1) $\mathcal{X}$ is $P$-cocomplete;

(2) $y_{\mathcal{X}}\colon\mathcal{X}\to P\mathcal{X}$ has a left
adjoint with invertible counit;

(3) $\mathcal{X}$ is the underlying object of a pseudo $P$-algebra.\end{prop}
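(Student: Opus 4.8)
The plan is to establish the equivalence (1)$\Leftrightarrow$(2) directly from the universal properties packaged in Definition~\ref{defkzdoctrine}, and to obtain (2)$\Leftrightarrow$(3) from the classical characterization of algebras for a lax-idempotent (KZ) pseudomonad: an object underlies a pseudo $P$-algebra precisely when its unit has a left adjoint with invertible counit, the left adjoint then being (up to isomorphism) the unique pseudo $P$-algebra structure \cite{kock1972,marm1997}. (Alternatively, (1)$\Leftrightarrow$(3) may be read off from the equivalence between the left-extension and algebraic notions of pseudo $P$-algebra recorded after Definition~\ref{defkzdoctrine}, identifying the $P$-cocomplete objects with the underlying objects of pseudo $P$-algebras.) So the real work is in (1)$\Leftrightarrow$(2).

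For (1)$\Rightarrow$(2), I apply $P$-cocompleteness to $G=\mathrm{id}_{\mathcal X}$. This yields a 1-cell $a:=\overline{\mathrm{id}_{\mathcal X}}\colon P\mathcal X\to\mathcal X$ together with an \emph{invertible} 2-cell $c_{\mathrm{id}_{\mathcal X}}\colon \mathrm{id}_{\mathcal X}\Rightarrow a\cdot y_{\mathcal X}$ exhibiting $a$ as a left extension of $\mathrm{id}_{\mathcal X}$ along $y_{\mathcal X}$. I take $\varepsilon:=c_{\mathrm{id}_{\mathcal X}}^{-1}$ as candidate counit. For the unit, recall from axiom (a) of Definition~\ref{defkzdoctrine} that $\mathrm{id}_{P\mathcal X}$ is the left extension of $y_{\mathcal X}$ along $y_{\mathcal X}$, exhibited by the identity; whiskering $c_{\mathrm{id}_{\mathcal X}}$ by $y_{\mathcal X}$ gives a 2-cell $y_{\mathcal X}\Rightarrow (y_{\mathcal X}\cdot a)\cdot y_{\mathcal X}$, so the universal property of this left extension produces a unique $\eta\colon \mathrm{id}_{P\mathcal X}\Rightarrow y_{\mathcal X}\cdot a$ whose whiskering by $y_{\mathcal X}$ is that 2-cell. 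One triangle identity for $a\dashv y_{\mathcal X}$ then holds by this defining property of $\eta$ together with $\varepsilon=c_{\mathrm{id}_{\mathcal X}}^{-1}$ (the two composites are mutually inverse whiskerings of $c_{\mathrm{id}_{\mathcal X}}$); for the other, I transport both sides across the left-extension bijection attached to $a$ and check they both become $c_{\mathrm{id}_{\mathcal X}}$ --- here it is crucial that $a$ was obtained as a \emph{left extension}, not merely as a 1-cell with $a\cdot y_{\mathcal X}\cong\mathrm{id}_{\mathcal X}$. (Equivalently one may cite that a unit/counit pair with invertible counit obeying one triangle identity is already an adjunction.) This gives (2).

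For (2)$\Rightarrow$(1), suppose $a\dashv y_{\mathcal X}$ with invertible counit $\varepsilon\colon a\cdot y_{\mathcal X}\Rightarrow\mathrm{id}_{\mathcal X}$. For $G\colon\mathcal B\to\mathcal X$ put $\overline G:=a\cdot\overline{y_{\mathcal X}\cdot G}$, where $\overline{y_{\mathcal X}\cdot G}\colon P\mathcal B\to P\mathcal X$ is the left extension of $y_{\mathcal X}\cdot G$ along $y_{\mathcal B}$ supplied by Definition~\ref{defkzdoctrine}(iii). The key point is that a left adjoint preserves left extensions: if $E$ has a right adjoint $R$ and $(\overline F,\kappa)$ is a left extension of $F$ along $L$, then $(E\cdot\overline F,\,E\kappa)$ is a left extension of $E\cdot F$ along $L$, since the bijection $\{E\cdot\overline F\Rightarrow M\}\cong\{\overline F\Rightarrow R\cdot M\}\cong\{F\Rightarrow R\cdot M\cdot L\}\cong\{E\cdot F\Rightarrow M\cdot L\}$ is exactly whiskering by $L$ and pasting with $E\kappa$ (using the triangle identities for $E\dashv R$). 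Applying this with $E=a$, the 1-cell $\overline G$ is a left extension of $a\cdot y_{\mathcal X}\cdot G$ along $y_{\mathcal B}$, and pasting the exhibiting 2-cell with $\varepsilon\cdot G\colon a\cdot y_{\mathcal X}\cdot G\cong G$ exhibits it as a left extension of $G$ along $y_{\mathcal B}$ via an isomorphism $c_G$. For the respecting clause, axiom (b) of Definition~\ref{defkzdoctrine} says $\overline{y_{\mathcal X}\cdot G}$ respects every $\overline F$ with $F\colon\mathcal A\to P\mathcal B$; post-composing with $a$ and using once more that $a$ preserves left extensions shows $\overline G$ respects $\overline F$. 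Hence $\mathcal X$ is $P$-cocomplete.

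I expect the main obstacle to be the bookkeeping of whiskerings in (1)$\Rightarrow$(2) --- precisely identifying $\eta$ and verifying both triangle identities, the subtle one being where the left-extension universal property of $a$ (rather than just the isomorphism $a\cdot y_{\mathcal X}\cong\mathrm{id}_{\mathcal X}$) has to be used. The lemma that left adjoints preserve left extensions is short, and the input (2)$\Leftrightarrow$(3) is classical and can be cited.
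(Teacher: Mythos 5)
Your argument is correct. Note, though, that the paper itself does not prove this proposition: it simply cites \cite[Theorem 5.1]{marm2012} for $(1)\iff(2)$ and \cite{lack1997} for $(2)\iff(3)$. What you have written out for $(1)\iff(2)$ is essentially the argument delegated to Marmolejo--Wood: for $(1)\Rightarrow(2)$ take $a=\overline{\mathrm{id}_{\mathcal X}}$ with counit $c_{\mathrm{id}_{\mathcal X}}^{-1}$ and manufacture the unit from the left-extension property of $\mathrm{id}_{P\mathcal X}$ along $y_{\mathcal X}$ (axiom (a)); for $(2)\Rightarrow(1)$ use that a left adjoint preserves left extensions and set $\overline G=a\cdot\overline{y_{\mathcal X}\cdot G}$, with axiom (b) supplying the respecting clause. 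Both directions check out, and you correctly identify that the delicate triangle identity needs the universal property of $a$ as a left extension, not merely the isomorphism $a\cdot y_{\mathcal X}\cong\mathrm{id}_{\mathcal X}$.

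One small caveat: your parenthetical alternative --- that a unit/counit pair with invertible counit satisfying one triangle identity is automatically an adjunction --- is not true in general (that statement holds for adjoint equivalences, where the unit is also invertible, or after modifying the counit). It is dispensable here, since your main argument establishes the second triangle identity directly by transporting both sides across the left-extension bijection attached to $a$, so the proof stands without it; I would simply delete that aside or replace it with the correct statement about modifying the counit.
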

\begin{proof}
For $\left(1\right)\iff\left(2\right)$ see the proof of \cite[Theorem 5.1]{marm2012},
and for $\left(2\right)\iff\left(3\right)$ see \cite{lack1997}.
\end{proof}
We now recall the notion of Yoneda structure as introduced by Street
and Walters \cite{yonedastructures}.
\begin{defn}
A \emph{Yoneda structure} $\mathfrak{Y}$ on a 2-category $\mathscr{C}$
consists of:

(1) A class of 1-cells $\mathbf{I}$ with the property that for any
$L\in\mathbf{I}$ we have $L\cdot F\in\mathbf{I}$ for all $F$ such
that the composite $L\cdot F$ is defined; we call this the class
of admissible 1-cells. We say an object $\mathcal{A}\in\mathscr{C}$
is admissible when $\textnormal{id}_{\mathcal{A}}$ is an admissible
1-cell. 

(2) For each admissible object $\mathcal{A}\in\mathscr{C}$, an admissible
map $y_{\mathcal{A}}\colon\mathcal{A}\to P\mathcal{A}$. 

(3) For each $L\colon\mathcal{A}\to\mathcal{B}$ such that $L$ and
$\mathcal{A}$ are both admissible, a 1-cell $R_{L}$ and 2-cell $\varphi_{L}$
as in the diagram
\[
\xymatrix@=1em{\mathcal{B}\ar[rr]^{R_{L}} &  & P\mathcal{A}\ar@{}[ld]|-{\stackrel{\varphi_{L}}{\Longleftarrow}}\\
 & \;\\
 &  & \mathcal{A}\ar[uu]_{y_{\mathcal{A}}}\ar[uull]^{L}
}
\]
Such that:

(a) The diagram above exhibits $L$ as a absolute left lifting and
$R_{L}$ as a left extension via $\varphi_{L}$.

(b) For each admissible $\mathcal{A}$, the diagram 
\[
\xymatrix@=1em{P\mathcal{A}\ar[rr]^{\textnormal{id}_{P\mathcal{A}}} &  & P\mathcal{A}\ar@{}[ld]|-{\stackrel{\textnormal{id }}{\Longleftarrow}}\\
 & \;\\
 &  & \mathcal{A}\ar[uu]_{y_{\mathcal{A}}}\ar[uull]^{y_{\mathcal{A}}}
}
\]
exhibits $\textnormal{id}_{P\mathcal{A}}$ as a left extension.

(c) For admissible $\mathcal{A},\mathcal{B}$ and $L,K$ as below,
the diagram 
\[
\xymatrix@=1em{P\mathcal{A} &  & P\mathcal{B}\ar@{}[ldld]|-{\stackrel{\varphi_{y_{\mathcal{B}\cdot}L}}{\implies}}\ar[ll]_{R_{y_{\mathcal{B}}\cdot L}}\ar@{}[rd]|-{\stackrel{\varphi_{K}}{\implies}} &  & \mathcal{C}\ar[ll]_{R_{K}}\\
 & \; &  & \;\\
\mathcal{A}\ar[uu]^{y_{\mathcal{A}}}\ar[rr]_{L} &  & \mathcal{B}\ar[uu]_{y_{\mathcal{B}}}\ar[urur]_{K}
}
\]
exhibits $R_{y_{\mathcal{B}}\cdot L}\cdot R_{K}$ as a left extension.\end{defn}
\begin{rem}
We note that when the admissible maps form a right ideal, the admissibility
of $L$ in condition (c) is redundant. However, in the following sections
we will consider a setting in which the admissible maps are closed
under composition, but do not necessarily form a right ideal.
\end{rem}

\begin{rem}
There is an additional axiom (d) discussed in ``Yoneda structures''
\cite{yonedastructures} which when satisfied defines a so called
\emph{good} Yoneda structure \cite{weberyoneda}. This axiom asks\emph{
}for every\emph{ }admissible $L$ and every diagram
\[
\xymatrix@=1em{\mathcal{B}\ar@{->}[rr]^{M} &  & P\mathcal{A}\ar@{}[ld]|-{\stackrel{\phi}{\Longleftarrow}}\\
 & \;\\
 &  & \mathcal{A}\ar[uu]_{y_{\mathcal{A}}}\ar[ulul]^{L}
}
\]
that if $\phi$ exhibits $L$ as an absolute left lifting, then $\phi$
exhibits $M$ as a left extension. This condition implies axioms (b)
and (c) in the presence of (a) \cite[Prop. 11]{yonedastructures}. 

However, this condition is often too strong. For example one may consider
the free $\mathbf{Cat}$-cocompletion, and take $\mathbb{N}$ to be
the monoid of natural numbers seen as a one object category, yielding
the absolute left lifting diagram
\[
\xymatrix@=1em{\mathbf{1}\ar@{->}[rr]^{\textnormal{pick }\mathbb{N}} &  & \mathbf{Cat}\ar@{}[ld]|-{\stackrel{!}{\Longleftarrow}}\\
 & \;\\
 &  & \mathbf{1}\ar[uu]_{\textnormal{pick }\mathbf{1}}\ar[ulul]^{\textnormal{id}_{\mathbf{1}}}
}
\]
It is then trivial, as we would be extending along an identity, that
the left extension property is not satisfied. 
\end{rem}

\section{Admissible Maps in KZ Doctrines}

Yoneda structures as defined above require us to give a suitable class
of admissible maps, and so in order to compare Yoneda structures with
KZ doctrines we will need a suitable notion of admissible map in the
setting of a KZ doctrine. Bunge and Funk defined a map $L\colon\mathcal{A}\to\mathcal{B}$
in the setting of a KZ pseudomonad $P$ to be $P$-admissible when
$PL$ has a right adjoint, and showed this notion of admissibility
may also be described in terms of left extensions \cite{bungefunk}.
Our definition in terms of left extensions and KZ doctrines is as
follows.
\begin{defn}
\label{defadm} Given a KZ doctrine $\left(P,y\right)$ on a 2-category
$\mathscr{C}$, we say a 1-cell $L\colon\mathcal{A}\to\mathcal{B}$
is \emph{$P$-admissible} when 
\[
\xymatrix@=1em{\mathcal{B}\ar@{->}[rr]^{R_{L}} &  & P\mathcal{A}\ar@{}[ld]|-{\stackrel{\varphi_{L}}{\Longleftarrow}} &  &  & \mathcal{B}\ar@{->}[rr]^{R_{L}} &  & P\mathcal{A}\ar@{}[ld]|-{\stackrel{\varphi_{L}}{\Longleftarrow}}\ar[rr]^{\overline{H}}\ar@{}[rd]|-{\stackrel{c_{H}}{\Longleftarrow}} &  & \mathcal{X}\\
 & \; &  &  &  &  & \; &  & \;\\
 &  & \mathcal{A}\ar[uu]_{y_{\mathcal{A}}}\ar[ulul]^{L} &  &  &  &  & \mathcal{A}\ar[uu]_{y_{\mathcal{A}}}\ar[ulul]^{L}\ar[rruu]_{H}
}
\]
 there exists a left extension $\left(R_{L},\varphi_{L}\right)$ of
$y_{\mathcal{A}}$ along $L$ as in the left diagram, and moreover
the left extension is respected by any $\overline{H}$ as in the right
diagram where $\mathcal{X}$ is $P$-cocomplete.\end{defn}
\begin{rem}
Note that such a $\overline{H}$ is a $P$-homomorphism, and conversely
that a $P$-homomorphism $\overline{H}\colon P\mathcal{A}\to\mathcal{X}$
is a left extension of $H:=\overline{H}\cdot y_{\mathcal{A}}$ along
$y_{\mathcal{A}}$ as above. Thus this is saying the left extension
$R_{L}$ is respected by $P$-homomorphisms.\end{rem}
\begin{lem}
\label{leftadj} Suppose we are given a KZ doctrine $\left(P,y\right)$
and a \emph{$P$-}admissible 1-cell $L\colon\mathcal{A}\to\mathcal{B}$
where $\mathcal{B}$ is $P$-cocomplete, then the 1-cell $R_{L}$
in 
\[
\xymatrix@=1em{\mathcal{B}\ar@{->}[rr]^{R_{L}} &  & P\mathcal{A}\ar@{}[ld]|-{\stackrel{\varphi_{L}}{\Longleftarrow}}\\
 & \;\\
 &  & \mathcal{A}\ar[uu]_{y_{\mathcal{A}}}\ar[ulul]^{L}
}
\]
has a left adjoint $\overline{L}\colon P\mathcal{A}\to\mathcal{B}$.\end{lem}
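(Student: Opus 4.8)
The candidate left adjoint will be the 1-cell $\overline{L}\colon P\mathcal{A}\to\mathcal{B}$ given by the left extension of $L$ along $y_{\mathcal{A}}$, exhibited by an invertible 2-cell $c_{L}\colon L\Rightarrow\overline{L}\cdot y_{\mathcal{A}}$; this exists because $\mathcal{B}$ is $P$-cocomplete, applying the defining property of $P$-cocompleteness to the 1-cell $L\colon\mathcal{A}\to\mathcal{B}$. The plan is then to construct a unit $\eta\colon\mathrm{id}_{P\mathcal{A}}\Rightarrow R_{L}\cdot\overline{L}$ and a counit $\epsilon\colon\overline{L}\cdot R_{L}\Rightarrow\mathrm{id}_{\mathcal{B}}$ and to verify the two triangle identities, using throughout the universal properties of the three relevant left extensions: $R_{L}=\mathrm{Lan}_{L}\,y_{\mathcal{A}}$, $\overline{L}=\mathrm{Lan}_{y_{\mathcal{A}}}\,L$, and $\mathrm{id}_{P\mathcal{A}}=\mathrm{Lan}_{y_{\mathcal{A}}}\,y_{\mathcal{A}}$, the last being axiom (a) of Definition \ref{defkzdoctrine}.

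For the unit, whiskering $c_{L}$ by $R_{L}$ and pasting with $\varphi_{L}$ produces a 2-cell $y_{\mathcal{A}}\Rightarrow R_{L}\cdot\overline{L}\cdot y_{\mathcal{A}}$; since $\mathrm{id}_{P\mathcal{A}}$ is the left extension of $y_{\mathcal{A}}$ along $y_{\mathcal{A}}$, this 2-cell factors through a unique $\eta\colon\mathrm{id}_{P\mathcal{A}}\Rightarrow R_{L}\cdot\overline{L}$. For the counit, the key observation is that $\overline{L}$ is an instance of the 1-cells ``$\overline{H}$'' occurring in Definition \ref{defadm}: it is a left extension along $y_{\mathcal{A}}$ exhibited by an isomorphism, and its codomain $\mathcal{B}$ is $P$-cocomplete. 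Hence $P$-admissibility of $L$ guarantees that the left extension $\left(R_{L},\varphi_{L}\right)$ is respected by $\overline{L}$, so that whiskering $\varphi_{L}$ by $\overline{L}$ exhibits $\overline{L}\cdot R_{L}$ as the left extension of $\overline{L}\cdot y_{\mathcal{A}}$ along $L$; pasting with the isomorphism $c_{L}$ then exhibits $\overline{L}\cdot R_{L}$ as $\mathrm{Lan}_{L}\,L$. Under this universal property the identity 2-cell on $L$ corresponds to a unique $\epsilon\colon\overline{L}\cdot R_{L}\Rightarrow\mathrm{id}_{\mathcal{B}}$, which is our counit.

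It then remains to check $\left(\epsilon\overline{L}\right)\cdot\left(\overline{L}\eta\right)=\mathrm{id}_{\overline{L}}$ and $\left(R_{L}\epsilon\right)\cdot\left(\eta R_{L}\right)=\mathrm{id}_{R_{L}}$. The first is an equality of 2-cells out of $\overline{L}$, so by the universal property of $\overline{L}=\mathrm{Lan}_{y_{\mathcal{A}}}\,L$ it is enough to check it after whiskering by $y_{\mathcal{A}}$ and pasting with $c_{L}$; the second is an equality of 2-cells out of $R_{L}$, so by the universal property of $R_{L}=\mathrm{Lan}_{L}\,y_{\mathcal{A}}$ it is enough to check it after whiskering by $L$ and pasting with $\varphi_{L}$. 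In each case one substitutes the defining equations for $\eta$ and $\epsilon$ and then collapses the resulting pasting diagram using the interchange law; both sides of the first identity reduce to $c_{L}$, and both sides of the second reduce to $\varphi_{L}$, so the identities follow from uniqueness in the respective universal properties. The only real work is this interchange-law bookkeeping; there is no conceptual obstacle, since the notion of $P$-admissibility has been set up exactly so that $R_{L}$ is a left extension respected by the ``realization'' $\overline{L}$, which is precisely what makes the nerve--realization adjunction $\overline{L}\dashv R_{L}$ go through.
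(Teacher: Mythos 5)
Your proposal is correct and follows essentially the same route as the paper: take $\overline{L}$ to be the left extension of $L$ along $y_{\mathcal{A}}$ supplied by $P$-cocompleteness of $\mathcal{B}$, obtain the unit from the universal property of $\mathrm{id}_{P\mathcal{A}}$ as the left extension of $y_{\mathcal{A}}$ along $y_{\mathcal{A}}$, and obtain the counit from the fact that $P$-admissibility makes $\overline{L}$ respect $\left(R_{L},\varphi_{L}\right)$, so that $\overline{L}\cdot R_{L}$ is a left extension of $L$ along $L$. The paper leaves the triangle identities as an exercise, whereas you indicate the (correct) reduction via the universal properties of $\overline{L}$ and $R_{L}$; the two arguments are otherwise identical.
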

\begin{proof}
Taking $\overline{L}$ to be the left extension 
\[
\xymatrix@=1em{P\mathcal{\mathcal{A}}\ar@{}[rd]|-{\stackrel{c_{L}}{\Longleftarrow}}\ar@{->}[rr]^{\overline{L}} &  & \mathcal{B}\\
 & \;\\
\mathcal{A}\ar[rruu]_{L}\ar[uu]^{y_{\mathcal{A}}}
}
\]
we then have $\overline{L}\dashv R_{L}$ since we may define $n:\textnormal{id}_{P\mathcal{A}}\to R_{L}\cdot\overline{L}$
and $e:\overline{L}\cdot R_{L}\to\textnormal{id}_{\mathcal{B}}$ respectively
as (since $L$ is\emph{ $P$-}admissible) the unique solutions to
\[
\xymatrix@=1em{ & \mathcal{B}\ar[rd]^{R_{L}} &  &  &  &  &  &  &  & \;\\
P\mathcal{A}\ar[ru]^{\overline{L}}\ar[rr]_{\textnormal{id}_{P\mathcal{A}}} & \ar@{}[u]|-{\Uparrow n} & P\mathcal{A} & \; & P\mathcal{A}\ar[r]^{\overline{L}} & \mathcal{B}\ar[r]^{R_{L}} & P\mathcal{A} &  & \mathcal{B}\ar[r]^{R_{L}}\ar@/^{2pc}/[rr]^{\textnormal{id}_{\mathcal{B}}} & P\mathcal{A}\ar[r]^{\overline{L}}\ar@{}[u]|-{\Uparrow e} & \mathcal{B} & \ar@{}[d]|-{=} & \mathcal{B}\ar[rr]^{\textnormal{id}_{\mathcal{B}}} &  & \mathcal{B}\\
 & \ar@{}[ur]|-{\quad\stackrel{\textnormal{id}}{\Rightarrow}} &  & \ar@{}[u]|-{=} &  & \;\ar@{}[ru]|-{\;\quad\stackrel{\varphi_{L}}{\Leftarrow}}\ar@{}[u]|-{\stackrel{c_{L}}{\Leftarrow}\quad} & \; &  &  & \;\ar@{}[ru]|-{\quad\;\stackrel{c_{L}}{\Leftarrow}}\ar@{}[u]|-{\stackrel{\varphi_{L}}{\Leftarrow}\quad} & \; & \; &  & \;\ar@{}[ur]|-{\stackrel{\textnormal{id}}{\Leftarrow}}\\
 &  & \mathcal{A}\ar[ulul]^{y_{\mathcal{A}}}\ar[uu]_{y_{\mathcal{A}}} &  &  &  & \mathcal{A}\ar[uul]^{L}\ar[uu]_{y_{\mathcal{A}}}\ar@/^{1pc}/[ulul]^{y_{\mathcal{A}}} &  &  &  & \mathcal{A}\ar[uul]^{y_{\mathcal{A}}}\ar[uu]_{L}\ar@/^{1pc}/[ulul]^{L} &  &  &  & \mathcal{A}\ar[uu]_{L}\ar[uull]^{L}
}
\]
Verifying the triangle identities is then a simple exercise.
\end{proof}
The following is an easy consequence of this Lemma.
\begin{lem}
\label{lanresran} Suppose we are given a KZ doctrine $\left(P,y\right)$
on a 2-category $\mathscr{C}$ and a \emph{$P$-}admissible 1-cell
$L\colon\mathcal{A}\to\mathcal{B}$. Then the 1-cell $\textnormal{res}_{L}$
defined here as the left extension in the top triangle
\[
\xymatrix@=1em{P\mathcal{A} &  & P\mathcal{B}\ar[ll]_{\textnormal{res}_{L}}\\
 & \;\ar@{}[ur]|-{\stackrel{c_{R_{L}}}{\implies}}\\
\mathcal{A}\ar[uu]^{y_{\mathcal{A}}}\ar[rr]_{L}\ar@{}[ur]|-{\stackrel{\varphi_{L}}{\implies}\quad} &  & \mathcal{B}\ar[uu]_{y_{\mathcal{B}}}\ar[ulul]^{R_{L}}
}
\]
has a left adjoint $\textnormal{lan}_{L}$, and when $R_{L}$ is \emph{$P$-}admissible,
a right adjoint $\textnormal{ran}_{L}$.\end{lem}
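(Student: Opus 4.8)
The plan is to deduce both halves of the statement from Lemma \ref{leftadj}, exploiting the fact that $P\mathcal{A}$ and $P\mathcal{B}$ are always $P$-cocomplete (being of the form $P(-)$). The only real content is to recognize $\textnormal{res}_L$ as an instance of the $R_{(-)}$ and $\overline{(-)}$ constructions appearing in that lemma.

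For the right adjoint, suppose $R_L$ is $P$-admissible. I would apply Lemma \ref{leftadj} directly to the $P$-admissible $1$-cell $R_L\colon\mathcal{B}\to P\mathcal{A}$, whose codomain $P\mathcal{A}$ is $P$-cocomplete. The lemma then asserts that the left extension of $R_L$ along $y_{\mathcal{B}}$ is left adjoint to $R_{R_L}$, the left extension of $y_{\mathcal{B}}$ along $R_L$. But the left extension of $R_L$ along $y_{\mathcal{B}}$ is exactly $\textnormal{res}_L$: the cell $c_{R_L}$ in the displayed diagram is precisely the KZ left-extension cell of the doctrine for the $1$-cell $R_L$. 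Hence $\textnormal{res}_L\dashv R_{R_L}$, and one sets $\textnormal{ran}_L:=R_{R_L}$; nothing further needs checking.

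For the left adjoint, the key intermediate claim is that $y_{\mathcal{B}}\cdot L\colon\mathcal{A}\to P\mathcal{B}$ is $P$-admissible with $R_{y_{\mathcal{B}}\cdot L}=\textnormal{res}_L$. Granting this, Lemma \ref{leftadj} applied to $y_{\mathcal{B}}\cdot L$ (its codomain $P\mathcal{B}$ being $P$-cocomplete) yields a left adjoint to $\textnormal{res}_L$, namely the left extension $\overline{y_{\mathcal{B}}\cdot L}$ of $y_{\mathcal{B}}\cdot L$ along $y_{\mathcal{A}}$; one sets $\textnormal{lan}_L:=\overline{y_{\mathcal{B}}\cdot L}$, which of course is nothing but the pseudofunctorial action of $P$ on $L$. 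To prove the claim, I would first paste $\varphi_L$ with $c_{R_L}$ whiskered on the right by $L$. Since $\varphi_L$ exhibits $R_L$ as a left extension of $y_{\mathcal{A}}$ along $L$ and $c_{R_L}$ exhibits $\textnormal{res}_L$ as a left extension of $R_L$ along $y_{\mathcal{B}}$, the transitivity of left extensions shows that this paste exhibits $\textnormal{res}_L$ as a left extension of $y_{\mathcal{A}}$ along $y_{\mathcal{B}}\cdot L$. It then remains to check that this left extension is respected by every $P$-homomorphism $\overline{H}\colon P\mathcal{A}\to\mathcal{X}$ with $\mathcal{X}$ $P$-cocomplete, and this follows by composing two respecting statements: $\overline{H}$ respects $R_L$ because $L$ is $P$-admissible (Definition \ref{defadm} and the remark following it), while $\overline{H}$ respects $\textnormal{res}_L=\overline{R_L}$ because $\overline{H}$, being a $P$-homomorphism out of $P\mathcal{A}$, respects all left extensions along $y_{\mathcal{B}}$ into $P\mathcal{A}$; whiskering and pasting these two facts shows $\overline{H}$ respects the composite left extension.

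The only step with genuine content is this last bookkeeping for the left adjoint — verifying that the ``respected by $P$-homomorphisms'' clause in the definition of $P$-admissibility propagates through the transitivity of left extensions — and even this is routine. I do not anticipate a real obstacle; the main care needed is simply keeping the sources and targets of the various whiskerings straight.
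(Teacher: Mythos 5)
Your proposal is correct and follows essentially the same route as the paper: both halves are obtained by applying Lemma \ref{leftadj} to $y_{\mathcal{B}}\cdot L$ (after showing it is $P$-admissible with $R_{y_{\mathcal{B}}\cdot L}=\textnormal{res}_{L}$ via the pasting lemma and the two respecting statements) and to $R_{L}$ respectively. The paper's proof is exactly this argument, stated slightly more tersely.
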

\begin{proof}
First note that it is an easy consequence of the left extension pasting
lemma (the dual of \cite[Prop. 1]{yonedastructures}) that $y_{\mathcal{B}}\cdot L$
is $P$-admissible, which is to say the left extension $\textnormal{res}_{L}$
above is respected by any $P$-homomorphism $\overline{H}\colon P\mathcal{A}\to\mathcal{X}$.
This is since such a $\overline{H}$ will respect the left extension
$R_{L}$ of $y_{\mathcal{A}}$ along $L$ as well as the left extension
$\textnormal{res}_{L}$ of $R_{L}$ along $y_{\mathcal{B}}$. Hence
by Lemma \ref{leftadj} $\textnormal{res}_{L}$ has a left adjoint
$\textnormal{lan}_{L}$ given as the left extension as on the left
(which is how $PL$ is defined given the data of Definition \ref{defkzdoctrine}),
\[
\xymatrix@=1em{P\mathcal{A}\ar@{->}[rr]^{\textnormal{lan}_{L}}\ar@{}[rdrd]|-{\stackrel{c_{y_{\mathcal{B}\cdot L}}}{\Longleftarrow}} &  & P\mathcal{B} &  &  & P\mathcal{A}\ar[rr]^{\textnormal{ran}_{L}} &  & P\mathcal{B}\\
 &  &  &  &  &  & \;\ar@{}[ur]|-{\stackrel{\varphi_{R_{L}}}{\implies}}\\
\mathcal{A}\ar[rr]_{L}\ar[uu]^{y_{\mathcal{A}}} &  & \mathcal{B}\ar[uu]_{y_{\mathcal{B}}} &  &  &  &  & \mathcal{B}\ar[uu]_{y_{\mathcal{B}}}\ar[ulul]^{R_{L}}
}
\]
and if $R_{L}$ is \emph{$P$-}admissible then we may define $\textnormal{ran}_{L}:=R_{R_{L}}$
(which is the left extension as on the right) and since $P\mathcal{A}$
is $P$-cocomplete $\textnormal{ran}_{L}$ has a left adjoint given
by $\textnormal{res}_{L}=\overline{R_{L}}$ again by Lemma \ref{leftadj}.\end{proof}
\begin{rem}
We have shown that when both $L$ and $R_{L}$ are $P$-admissible
we have the adjoint triple $PL\dashv\overline{R_{L}}\dashv R_{R_{L}}$.
Of particular interest is the case where $L=y_{\mathcal{A}}$ for
some $\mathcal{A}\in\mathscr{C}$. Clearly in this case both $L$
and $R_{L}$ are $P$-admissible and so we may define $\mu_{\mathcal{A}}:=\overline{R_{y_{\mathcal{A}}}}=\overline{\textnormal{id}_{P\mathcal{A}}}$
and observe $R_{R_{y_{\mathcal{A}}}}=R_{\textnormal{id}_{P\mathcal{A}}}=y_{P\mathcal{A}}$
to recover the well known sequence of adjunctions $Py_{\mathcal{A}}\dashv\mu_{\mathcal{A}}\dashv y_{P\mathcal{A}}$
as in \cite{marm1997}.
\end{rem}
The following result is mostly due to Bunge and Funk \cite{bungefunk},
though we state it in our notation and from the viewpoint of KZ doctrines
in terms of left extensions. Also, we will prove the following proposition
in full detail in order to clarify some parts of the argument given
by Bunge and Funk \cite{bungefunk}. For example, in order to check
that certain left extensions are respected we will need to know their
exhibiting 2-cells. These exhibiting 2-cells will also be needed later
to prove our main result.
\begin{prop}
\label{admequiv} Given a KZ doctrine $\left(P,y\right)$ on a 2-category
$\mathscr{C}$ and a 1-cell $L\colon\mathcal{A}\to\mathcal{B}$, the
following are equivalent:

(1) $L$ is \emph{$P$-}admissible;

(2) every $P$-cocomplete object $\mathcal{X}\in\mathscr{C}$ admits,
and $P$-homomorphism respects, left extensions along $L$. This says
that for any given 1-cell $K\colon\mathcal{A}\to\mathcal{X}$, where
$\mathcal{X}$ is $P$-cocomplete, there exists a 1-cell $J$ and
2-cell $\delta$ as on the left 
\[
\xymatrix@=1em{\mathcal{B}\ar@{->}[rr]^{J} &  & \mathcal{X}\ar@{}[ld]|-{\stackrel{\delta}{\Longleftarrow}} &  &  & \mathcal{B}\ar@{->}[rr]^{J} &  & \mathcal{X}\ar@{}[ld]|-{\stackrel{\delta}{\Longleftarrow}}\ar[rr]^{E} &  & \mathcal{Y}\\
 & \; &  &  &  &  & \; &  & \;\\
 &  & \mathcal{A}\ar[uu]_{K}\ar[ulul]^{L} &  &  &  &  & \mathcal{A}\ar[uu]_{K}\ar[ulul]^{L}
}
\]
exhibiting $J$ as a left extension, and moreover this left extension
is respected by any $P$-homomorphism $E\colon\mathcal{X}\to\mathcal{Y}$
for $P$-cocomplete $\mathcal{Y}$ as in the right diagram.

(3) $PL:=\textnormal{lan}_{L}$ given as the left extension
\[
\xymatrix@=1em{P\mathcal{A}\ar@{->}[rr]^{PL}\ar@{}[rdrd]|-{\stackrel{c_{y_{\mathcal{B}\cdot L}}}{\Longleftarrow}} &  & P\mathcal{B}\\
\\
\mathcal{A}\ar[rr]_{L}\ar[uu]^{y_{\mathcal{A}}} &  & \mathcal{B}\ar[uu]_{y_{\mathcal{B}}}
}
\]
has a right adjoint. \emph{We denote the inverse of the above 2-cell
as $y_{L}:=c_{y_{\mathcal{B}\cdot L}}^{-1}$ for every 1-cell $L$.}\end{prop}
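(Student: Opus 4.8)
The plan is to prove the cycle of implications $(1)\Rightarrow(2)\Rightarrow(3)\Rightarrow(1)$, extracting along the way the explicit exhibiting 2-cells, as the statement of the proposition promises.

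For $(1)\Rightarrow(2)$, suppose $L$ is $P$-admissible and let $K\colon\mathcal{A}\to\mathcal{X}$ with $\mathcal{X}$ $P$-cocomplete. Since $\mathcal{X}$ is $P$-cocomplete, there is a left extension $\overline{K}\colon P\mathcal{A}\to\mathcal{X}$ of $K$ along $y_{\mathcal{A}}$ exhibited by the isomorphism $c_{K}$; this $\overline{K}$ is a $P$-homomorphism. I would then \emph{define} $J:=\overline{K}\cdot R_{L}$ and let $\delta$ be the pasting of $\varphi_{L}$ with $c_{K}$ (whiskered by $\overline{K}$ on one side and $L$ on the other). By $P$-admissibility of $L$, the left extension $(R_{L},\varphi_{L})$ is respected by the $P$-homomorphism $\overline{K}$, which is precisely the statement that this pasting exhibits $J=\overline{K}\cdot R_{L}$ as a left extension of $\overline{K}\cdot y_{\mathcal{A}}\cong K$ along $L$; a small argument using the isomorphism $c_{K}$ identifies the left extension of $\overline{K}\cdot y_{\mathcal{A}}$ with that of $K$. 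For the second half, let $E\colon\mathcal{X}\to\mathcal{Y}$ be a $P$-homomorphism with $\mathcal{Y}$ $P$-cocomplete. Then $E\cdot\overline{K}\colon P\mathcal{A}\to\mathcal{Y}$ is again a $P$-homomorphism (composites of $P$-homomorphisms are $P$-homomorphisms, and is a left extension of $E\cdot K$ along $y_{\mathcal{A}}$), so it too respects $(R_{L},\varphi_{L})$; unwinding, this says $E$ respects the left extension $J$. Here the only mildly delicate point is bookkeeping the isomorphism $c_{K}$ so that ``left extension of $\overline{K}\cdot y_{\mathcal{A}}$'' becomes ``left extension of $K$'', which is routine since isomorphic 1-cells have the same left extensions.

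For $(2)\Rightarrow(3)$, apply hypothesis $(2)$ with $\mathcal{X}=P\mathcal{B}$ (which is $P$-cocomplete) and $K=y_{\mathcal{B}}\cdot L$. This produces a left extension $J$ of $y_{\mathcal{B}}\cdot L$ along $L$ respected by all $P$-homomorphisms; by uniqueness of left extensions this $J$ agrees (up to canonical isomorphism) with $PL=\textnormal{lan}_{L}$ built from the KZ data via $c_{y_{\mathcal{B}\cdot L}}$. I then need $PL$ to have a right adjoint. The natural candidate is $R_{y_{\mathcal{B}}\cdot L}$, which exists because $(2)$ applied with $\mathcal{X}=P\mathcal{A}$, $K=y_{\mathcal{A}}$ — no, rather: I should first observe that $(2)$ with $K=y_{\mathcal{A}}$ gives that $R_{L}$ exists and is respected by $P$-homomorphisms, i.e. $(2)\Rightarrow(1)$ for the mere existence part is immediate, and then invoke Lemma~\ref{leftadj} and Lemma~\ref{lanresran}: since $P\mathcal{B}$ is $P$-cocomplete and $y_{\mathcal{B}}\cdot L$ is $P$-admissible (its left extension $\textnormal{res}_{L}$ along $y_{\mathcal{B}}$ being respected by $P$-homomorphisms by the left extension pasting lemma), Lemma~\ref{lanresran} already gives that $\textnormal{res}_{L}$ has a left adjoint $\textnormal{lan}_{L}=PL$; dualizing, $PL=\textnormal{lan}_{L}$ has a right adjoint, namely $\textnormal{res}_{L}=\overline{R_{L}}$. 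So the work here is mostly a matter of citing the two lemmas with the correct inputs.

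For $(3)\Rightarrow(1)$, suppose $PL$ has a right adjoint, say $PL\dashv Q$ with unit $\iota$ and counit $\epsilon$. I would first recover $R_{L}$ and $\varphi_{L}$: set $R_{L}:=Q\cdot y_{P\mathcal{B}}$ — no; the cleaner route, following Bunge–Funk, is to set $R_{L}:=\mu_{\mathcal{A}}\cdot Q\cdot y_{P\mathcal{B}}$, but more directly one uses $\overline{R_L}$; the standard fact is that $R_{L}$ is recovered as the composite $P\mathcal{B}\xrightarrow{Q}P\mathcal{A}$ precomposed appropriately — I will instead define $R_{L}$ via the adjunction applied to $y_{\mathcal{A}}$: since $P\mathcal{A}$ is $P$-cocomplete, $y_{\mathcal{A}}$ has a ``$P$-extension'' and one checks $R_L := $ (the mate of $c_{y_{\mathcal B\cdot L}}$ under $PL\dashv Q$, whiskered by $y_{\mathcal B}$) together with the unit/counit yields a left extension of $y_{\mathcal A}$ along $L$ exhibited by a 2-cell $\varphi_L$ obtained by pasting $y_L = c_{y_{\mathcal{B}\cdot L}}^{-1}$ with the unit of the adjunction. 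Concretely $\varphi_{L}$ is the pasting of $y_{L}$ with $\iota$ suitably whiskered. That this exhibits a left extension, and an absolute left lifting, is the content of \cite{bungefunk}; I would verify it by the standard ``adjunctions give absolute liftings/extensions'' argument. Finally, for $P$-admissibility I must show $(R_{L},\varphi_{L})$ is respected by every $P$-homomorphism $\overline{H}\colon P\mathcal{A}\to\mathcal{X}$ with $\mathcal{X}$ $P$-cocomplete: the key is that $\overline{H}$, being a $P$-homomorphism, commutes up to coherent isomorphism with the extension operations, hence with $PL$ and its right adjoint in the appropriate bicategorical sense; whiskering the adjunction $PL\dashv Q$ by $\overline{H}$ and using that $\overline{H}\cdot\mu_{\mathcal{A}}\cong(\text{action of }\mathcal{X})\cdot P\overline{H}$ produces the desired respected left extension. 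I expect \textbf{this last step — showing the recovered left extension $R_{L}$ is respected by all $P$-homomorphisms, with explicit control of the exhibiting 2-cell — to be the main obstacle}, since it is exactly the place where one must marry the adjunction $PL\dashv Q$ with the (pseudo)functoriality of $P$ on $P$-homomorphisms, and where the careful tracking of the isomorphisms $c_{(-)}$ and $y_{L}$ that the proposition flags as needed for later use actually pays off.
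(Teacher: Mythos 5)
Your overall architecture is sound and, for the easy implications, matches the paper: $(2)\Rightarrow(1)$ because $P\mathcal{A}$ is $P$-cocomplete, $(1)\Rightarrow(2)$ by pasting $\varphi_{L}$ with $c_{K}$ for the $P$-homomorphism $\overline{K}$, and your route to $(3)$ via $(2)\Rightarrow(1)$ followed by Lemma~\ref{lanresran} is exactly the paper's. The problem is $(3)\Rightarrow(1)$, which is where essentially all of the work of the proposition lives and which your proposal does not actually carry out. You do land (after some false starts with ill-typed composites such as $Q\cdot y_{P\mathcal{B}}$, which does not compose) near the correct candidate data, namely $R_{L}:=\textnormal{res}_{L}\cdot y_{\mathcal{B}}$ with $\varphi_{L}$ the pasting of $y_{L}$ with the unit of $\textnormal{lan}_{L}\dashv\textnormal{res}_{L}$, as in Remark~\ref{defvarphi}; but the verification that this is a left extension of $y_{\mathcal{A}}$ along $L$ respected by every $P$-homomorphism is delegated to ``the standard argument'' and a citation, and the sketch you give would not go through as written.

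The concrete missing idea is the lemma forming Part 1 of the paper's proof: \emph{the given right adjoint $\textnormal{res}_{L}$ is itself a $P$-homomorphism}, i.e.\ it is exhibited by the identity 2-cell as a left extension of $\textnormal{res}_{L}\cdot y_{\mathcal{B}}$ along $y_{\mathcal{B}}$. A priori a $P$-homomorphism $\overline{H}$ only respects left extensions along unit components, whereas $R_{L}$ is a left extension along $L$; the bridge between these is precisely this lemma. It is proved by applying $P$ to the square $y_{L}$, passing to the mate square of right adjoints to obtain $\textnormal{res}_{L}\cong\mu_{\mathcal{A}}\cdot P\textnormal{res}_{L}\cdot Py_{\mathcal{B}}$, and recognising the right-hand side as a composite of $P$-homomorphisms via \cite[Theorem 4.2]{marm2012}. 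With it in hand, everything follows from a chain of bijections: a 2-cell $H\to K\cdot L$ corresponds through $y_{L}$ and $c_{H}$ to $\overline{H}\to\overline{K}\cdot\textnormal{lan}_{L}$, hence by mates to $\overline{H}\cdot\textnormal{res}_{L}\to\overline{K}$, hence --- and this is exactly where the lemma is used, since one must know that $\overline{H}\cdot\textnormal{res}_{L}$ is a left extension of its restriction along $y_{\mathcal{B}}$ --- to $\overline{H}\cdot\textnormal{res}_{L}\cdot y_{\mathcal{B}}\to K$. Your appeal to $\overline{H}\cdot\mu_{\mathcal{A}}\cong(\textnormal{action of }\mathcal{X})\cdot P\overline{H}$ points at the right circle of ideas but is aimed at the wrong 1-cell: the pseudo-algebra compatibility that must be established is for $\textnormal{res}_{L}$, not for $\overline{H}$.
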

\begin{proof}
The following implications prove the logical equivalence.

$\left(2\right)\implies\left(1\right)\colon$ This is trivial as $P\mathcal{A}$
is $P$-cocomplete.

$\left(1\right)\implies\left(2\right)\colon$ Given a $K\colon\mathcal{A}\to\mathcal{X}$
as in (2). We take the pasting 
\[
\xymatrix@=1em{\mathcal{B}\ar@{->}[rr]^{R_{L}} &  & P\mathcal{A}\ar@{}[ld]|-{\stackrel{\varphi_{L}}{\Longleftarrow}}\ar[rr]^{\overline{K}}\ar@{}[rd]|-{\stackrel{c_{K}}{\Longleftarrow}} &  & \mathcal{X}\\
 & \; &  & \;\\
 &  & \mathcal{A}\ar[uu]_{y_{\mathcal{A}}}\ar[ulul]^{L}\ar[rruu]_{K}
}
\]
as our left extension using that $L$ is \emph{$P$-}admissible. This
is respected by any $P$-homomorphism $E\colon\mathcal{X}\to\mathcal{Y}$
where $\mathcal{Y}$ is $P$-cocomplete as a consequence of the second
part of the definition of $P$-admissibility.

$\left(1\right)\implies\left(3\right)\colon$ This was shown in Lemma
\ref{lanresran}.

$\left(3\right)\implies\left(1\right)\colon$ This implication is
where the majority of the work lies in proving this proposition. We
suppose that we are given an adjunction $\textnormal{lan}_{L}\dashv\textnormal{res}_{L}$
with unit $\eta$ where $\textnormal{lan}_{L}$ is defined as in (3).
We split the proof into two parts.

\textsc{Part 1:} \emph{The given right adjoint, $\textnormal{res}_{L}$,
is a left extension of $\textnormal{res}_{L}\cdot y_{\mathcal{B}}$
along $y_{\mathcal{B}}$ as in the diagram} 
\[
\xymatrix@=1em{P\mathcal{B}\ar[rrrr]^{\textnormal{res}_{L}}\ar@{}[rrd]|-{\stackrel{\textnormal{id}}{\Longleftarrow}} &  &  &  & P\mathcal{A}\\
 &  & P\mathcal{B}\ar[rru]_{\textnormal{res}_{L}}\\
\mathcal{B}\ar[uu]^{y_{\mathcal{B}}}\ar[rru]_{y_{\mathcal{B}}}
}
\]
\emph{exhibited by the identity 2-cell.}\footnote{This may be seen as an analogue of \cite[Prop. 1.3]{bungefunk}. However, we emphasize here that considering right adjoints tells us $\textnormal{res}_{L}$ is a $P$-homomorphism since the adjunctions may be used to construct an isomorphism between $\textnormal{res}_{L}$ and a known $P$-homomorphism. }

To see this, we consider the isomorphism in the square on the left
\[
\xymatrix@=1em{P\mathcal{A}\ar@{->}[rr]^{PL}\ar@{}[rdrd]|-{\stackrel{y_{L}}{\implies}} &  & P\mathcal{B} &  &  & P^{2}\mathcal{A}\ar@{->}[rr]^{P^{2}L}\ar@{}[rdrd]|-{\stackrel{Py_{L}}{\implies}} &  & P^{2}\mathcal{B} &  &  & P^{2}\mathcal{A}\ar@{}[rdrd]|-{\cong}\ar[dd]_{\mu_{\mathcal{A}}} &  & P^{2}\mathcal{B}\ar[ll]_{P\textnormal{res}_{L}}\ar[dd]^{\mu_{\mathcal{B}}}\\
\\
\mathcal{A}\ar[rr]_{L}\ar[uu]^{y_{\mathcal{A}}} &  & \mathcal{B}\ar[uu]_{y_{\mathcal{B}}} &  &  & P\mathcal{A}\ar[rr]_{PL}\ar[uu]^{Py_{\mathcal{A}}} &  & P\mathcal{B}\ar[uu]_{Py_{\mathcal{B}}} &  &  & P\mathcal{A} &  & P\mathcal{B}\ar[ll]^{\textnormal{res}_{L}}
}
\]
and then apply $P$ to get the isomorphism of left adjoints in the
middle square (suppressing pseudofunctoriality constraints\footnote{These pseudofunctoriality constraints are those arising from the uniqueness of left extensions up to coherent isomorphism.}),
which corresponds to an isomorphism of right adjoints in the right
square (which we leave unnamed). Now by \cite[Theorem 4.2]{marm2012}
(and since $\mu_{\mathcal{A}}\cdot P\textnormal{res}_{L}$ respects
the left extension $Py_{\mathcal{B}}$) we have the left extension
$\mu_{\mathcal{A}}\cdot P\textnormal{res}_{L}\cdot Py_{\mathcal{B}}$
of $\textnormal{res}_{L}\cdot y_{\mathcal{B}}$ along $y_{\mathcal{B}}$
as below
\[
\xymatrix@=1em{ &  & \ar@{}[d]|-{\cong} &  & P\mathcal{B}\ar[rrd]^{\textnormal{res}_{L}}\ar@{}[d]|-{\cong}\\
P\mathcal{B}\ar[rr]^{Py_{\mathcal{B}}}\ar@/^{0.75pc}/[rrrru]^{\textnormal{id}_{P\mathcal{B}}} &  & P^{2}\mathcal{B}\ar[rr]^{P\textnormal{res}_{L}}\ar@/^{0.25pc}/[rru]^{\mu_{\mathcal{B}}} &  & P^{2}\mathcal{A}\ar[rr]^{\mu_{\mathcal{A}}}\ar@{}[rd]|-{\cong} &  & P\mathcal{A}\\
 & \ar@{}[]|-{\Downarrow y_{y_{\mathcal{B}}}} &  & \ar@{}[]|-{\quad\Downarrow y_{\textnormal{res}_{L}}} &  & \;\\
\mathcal{B}\ar[rr]_{y_{\mathcal{B}}}\ar[uu]^{y_{\mathcal{B}}} &  & P\mathcal{B}\ar[rr]_{\textnormal{res}_{L}}\ar[uu]_{y_{P\mathcal{B}}} &  & P\mathcal{A}\ar[rruu]_{\textnormal{id}_{P\mathcal{A}}}\ar[uu]_{y_{P\mathcal{A}}}
}
\]
and so pasting with the isomorphism $\mu_{\mathcal{A}}\cdot P\textnormal{res}_{L}\cdot Py_{\mathcal{B}}\cong\textnormal{res}_{L}$
constructed as above tells us $\textnormal{res}_{L}$ is also an extension
of $\textnormal{res}_{L}\cdot y_{\mathcal{B}}$ along $y_{\mathcal{B}}$.
It follows that $\textnormal{res}_{L}$ respects the left extension
\[
\xymatrix@=1em{P\mathcal{B}\ar[rr]^{\textnormal{id}_{P\mathcal{B}}}\ar@{}[rd]|-{\stackrel{\textnormal{id}}{\Longleftarrow}} &  & P\mathcal{B}\\
 & \;\\
\mathcal{B}\ar[uu]^{y_{\mathcal{B}}}\ar[uurr]_{y_{\mathcal{B}}}
}
\]
and this gives the result. 

\textsc{Part 2:}\emph{ The following pasting exhibits} 
\[
\xymatrix@=1em{\mathcal{B}\ar[rr]^{y_{\mathcal{B}}} &  & P\mathcal{B}\ar[rr]^{\textnormal{res}_{L}} &  & P\mathcal{A}\ar[rr]^{\overline{H}} &  & \mathcal{X}\\
 & \; & \ar@{}[l]|-{\;\;\;\stackrel{y_{L}}{\Longleftarrow}} & P\mathcal{A}\ar[ul]^{\textnormal{lan}_{L}}\ar@{}[u]|-{\stackrel{\eta}{\Longleftarrow}}\ar[ru]_{\textnormal{id}_{P\mathcal{A}}} & \ar@{}[0,1]|-{\stackrel{c_{H}}{\Longleftarrow}\;\;\;} & \;\\
 &  &  & \mathcal{A}\ar@/^{1pc}/[ulull]^{L}\ar[u]^{y_{\mathcal{A}}}\ar@/_{1pc}/[rurru]_{H} & \;
}
\]
\emph{the composite $\overline{H}\cdot\textnormal{res}_{L}\cdot y_{\mathcal{B}}$
as a left extension of $H$ along $L$. }

Suppose we are given a 1-cell $K\colon\mathcal{B}\to\mathcal{X}$.
We then see that our left extension is exhibited by the sequence of
natural bijections\def\fCenter{\ \rightarrow\ } 
\def\ScoreOverhang{2pt}
\settowidth{\rhs}{$\overline{K}\cdot\textnormal{lan}_{L}\cdot y_{\mathcal{A}}$} 
\settowidth{\lhs}{$\overline{H}\cdot\textnormal{res}_{L}\cdot y_{\mathcal{B}}$}
\begin{prooftree}
\Axiom$\makebox[\lhs][r]{$H$} \fCenter \makebox[\rhs][l]{$K \cdot L$}$
\RightLabel{$\qquad\overline{K}\cdot y_{\mathcal{B}}\cong K$} 
\UnaryInf$\makebox[\lhs][r]{$H$} \fCenter \makebox[\rhs][l]{$\overline{K} \cdot y_\mathcal{B} \cdot L$}$ 
\RightLabel{$\qquad\textnormal{lan}_{L}\cdot y_{\mathcal{A}}\cong y_{\mathcal{B}}\cdot L$}
\UnaryInf$\makebox[\lhs][r]{$H$} \fCenter \makebox[\rhs][l]{$\overline{K}\cdot\textnormal{lan}_{L}\cdot y_{\mathcal{A}}$}$ 
\RightLabel{$\qquad c_{H}\textnormal{ exhibits }\overline{H}\textnormal{ as a left extension}$}
\UnaryInf$\makebox[\lhs][r]{$\overline H$} \fCenter \makebox[\rhs][l]{$\overline K \cdot \textnormal{lan}_L$}$ 
\RightLabel{$\qquad \textnormal{mates correspondence}$}
\UnaryInf$\makebox[\lhs][r]{$\overline{H}\cdot\textnormal{res}_{L}$} \fCenter \makebox[\rhs][l]{$\overline{K}$}$ 
\RightLabel{$\qquad \textnormal{left extension \ensuremath{\textnormal{res}_{L}} in Part 1 preserved by }\overline{H}$}
\UnaryInf$\makebox[\lhs][r]{$\overline{H}\cdot\textnormal{res}_{L}\cdot y_{\mathcal{B}}$} \fCenter \makebox[\rhs][l]{$\overline{K}\cdot y_\mathcal{B}$}$ 
\RightLabel{$\qquad \overline{K}\cdot y_{\mathcal{B}}\cong K$}
\UnaryInf$\makebox[\lhs][r]{$\overline{H}\cdot\textnormal{res}_{L}\cdot y_{\mathcal{B}}$} \fCenter \makebox[\rhs][l]{$K$}$ 
\end{prooftree}

It is easily seen this left extension is exhibited by the above 2-cell
since when taking $K=\overline{H}\cdot\textnormal{res}_{L}\cdot y_{\mathcal{B}}$
we may take $\overline{K}=\overline{H}\cdot\textnormal{res}_{L}$
as a consequence of Part 1 (with the left extension $\overline{K}$
exhibited by the identity 2-cell). Tracing through the bijection to
find the exhibiting 2-cell is then trivial. \end{proof}
\begin{rem}
\label{defvarphi} Considering Part 2 in the above proposition with
$H=y_{\mathcal{A}}$ and $\overline{H}$ and $c_{H}$ being an identity
1-cell and 2-cell respectively, we see that for any $P$-admissible
1-cell $L\colon\mathcal{A}\to\mathcal{B}$ and corresponding adjunction
$PL\dashv\textnormal{res}_{L}$ with unit $\eta$, we may define our
1-cell $R_{L}$ and 2-cell $\varphi_{L}$ as in Definition \ref{defadm}
by 
\[
\xymatrix@=1em{\mathcal{B}\ar@{->}[rr]^{R_{L}} &  & P\mathcal{A}\ar@{}[ld]|-{\stackrel{\varphi_{L}}{\Longleftarrow}} &  &  &  & \mathcal{B}\ar[rr]^{y_{\mathcal{B}}} & \; & P\mathcal{B}\ar[r]^{\textnormal{res}_{L}} & P\mathcal{A}\\
 & \; &  &  & := &  &  &  & \ar@{}[ul]|-{\stackrel{y_{L}}{\Longleftarrow}} & P\mathcal{A}\ar@/^{0.5pc}/[ul]^{\textnormal{lan}_{L}}\ar@{}[ul]|-{\quad\stackrel{\eta}{\Longleftarrow}}\ar[u]_{\textnormal{id}_{P\mathcal{A}}}\\
 &  & \mathcal{A}\ar[uu]_{y_{\mathcal{A}}}\ar[ulul]^{L} &  &  &  &  &  &  & \mathcal{A}\ar@/^{1pc}/[ulull]^{L}\ar[u]_{y_{\mathcal{A}}}
}
\]

We will make regular use of this definition in the next section.
\end{rem}

\begin{rem}
It is clear from the above proposition that \emph{$P$-}admissible
1-cells are closed under composition as noted by Bunge and Funk \cite{bungefunk}.
We may also note, as in \cite{bungefunk}, that every left adjoint
is $P$-admissible, as taking $PL:=\textnormal{lan}_{L}$ defines
a pseudofunctor \cite[Theorem 4.1]{marm2012} and so preserves the
adjunction.
\end{rem}

\section{Relating KZ doctrines and Yoneda Structures \label{yonedastructuresandkzdoctrines}}

We are now ready to prove our main result. In the following statement
we call a KZ doctrine locally fully faithful if the unit components
are fully faithful; indeed Bunge and Funk \cite{bungefunk} noted
that a KZ pseudomonad is locally fully faithful precisely when its
unit components  are fully faithful. Here the admissible maps of Bunge
and Funk refer to those maps $L$ for which $PL:=\textnormal{lan}_{L}$
has a right adjoint (which we denote by $\textnormal{res}_{L}$).
\begin{thm}
\label{admabll} Suppose we are given a locally fully faithful KZ
doctrine $\left(P,y\right)$ on a 2-category $\mathscr{C}$. Then
on defining the class of admissible maps $L$ to be those of Bunge
and Funk, with chosen left extensions $\left(R_{L},\varphi_{L}\right)$
those of Remark \ref{defvarphi}, we obtain all of the definition
and axioms of a Yoneda structure with the exception of the right ideal
property (though the admissible maps remain closed under composition).\end{thm}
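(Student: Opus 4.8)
To prove Theorem \ref{admabll} I would organise the verification around the data together with the three axioms (a), (b), (c), leaving the right ideal property aside. For the data: take $\mathbf{I}$ to be the class of $P$-admissible $1$-cells, so that closure under composition is the fact already recorded after Proposition \ref{admequiv}; every identity is a left adjoint and hence $P$-admissible, so every object is admissible and clause (2) of the definition is unambiguous; and $y_{\mathcal{A}}\colon\mathcal{A}\to P\mathcal{A}$ is $P$-admissible because $Py_{\mathcal{A}}=\textnormal{lan}_{y_{\mathcal{A}}}$ has the right adjoint $\mu_{\mathcal{A}}$, so it serves as the required admissible unit. For each $P$-admissible $L$ take $(R_L,\varphi_L)$ as in Remark \ref{defvarphi}, namely $R_L:=\textnormal{res}_L\cdot y_{\mathcal{B}}$ with $\varphi_L$ the pasting of the unit $\eta$ of $\textnormal{lan}_L\dashv\textnormal{res}_L$ with the structural isomorphism $y_L\colon\textnormal{lan}_L\cdot y_{\mathcal{A}}\to y_{\mathcal{B}}\cdot L$.

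Axiom (b) is immediate, being exactly clause (a) of Definition \ref{defkzdoctrine}. The ``left extension'' half of axiom (a) is also already in hand: specialising Part 2 of the proof of Proposition \ref{admequiv} to $H=y_{\mathcal{A}}$ with $\overline{H}=\textnormal{id}_{P\mathcal{A}}$ and $c_H=\textnormal{id}$ shows that $R_L=\textnormal{res}_L\cdot y_{\mathcal{B}}$, exhibited by precisely the $2$-cell $\varphi_L$, is a left extension of $y_{\mathcal{A}}$ along $L$.

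The remaining content of axiom (a) is that $\varphi_L$ exhibits $L$ as an \emph{absolute} left lifting of $y_{\mathcal{A}}$ through $R_L$, and this is the one place local full faithfulness is used. The plan is to write the claimed bijection, between $2$-cells $L\to K$ and $2$-cells $y_{\mathcal{A}}\to R_L\cdot K$ (for $K\colon\mathcal{A}\to\mathcal{B}$), as a composite of three bijections: postcomposition with $y_{\mathcal{B}}$ (bijective because $y_{\mathcal{B}}$ is fully faithful), precomposition with the isomorphism $y_L$, and the hom-adjunction of $\textnormal{lan}_L\dashv\textnormal{res}_L$ (legitimate because $R_L=\textnormal{res}_L\cdot y_{\mathcal{B}}$). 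Unwinding the definition of $\varphi_L$ identifies this composite with pasting by $\varphi_L$. For absoluteness one notes that exactly the same three bijections, now of the form $\mathscr{C}(\mathcal{X},\mathcal{B})(L\cdot F,K')\cong\mathscr{C}(\mathcal{X},P\mathcal{A})(y_{\mathcal{A}}\cdot F,R_L\cdot K')$, work for any $F\colon\mathcal{X}\to\mathcal{A}$ and any $K'$: postcomposition by a fully faithful $1$-cell stays bijective, $y_L\cdot F$ is still invertible, and applying $\mathscr{C}(\mathcal{X},-)$ to $\textnormal{lan}_L\dashv\textnormal{res}_L$ again gives an adjunction; and the composite is pasting by $\varphi_L\cdot F$. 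Hence $\varphi_L\cdot F$ exhibits $L\cdot F$ as a left lifting for every $F$, which is absoluteness.

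Finally, axiom (c). Here $L$ and $K$ are admissible and $y_{\mathcal{B}}\cdot L$ is admissible by closure under composition, so every $1$-cell in the diagram is defined. The plan is: first, by the left extension pasting lemma (the dual of \cite[Prop. 1]{yonedastructures}), $R_{y_{\mathcal{B}}\cdot L}$ is the left extension of $R_L$ along $y_{\mathcal{B}}$, hence coincides up to the canonical comparison with $\textnormal{res}_L$, which is a $P$-homomorphism by Part 1 of the proof of Proposition \ref{admequiv}. Since $K$ is $P$-admissible this $P$-homomorphism respects the left extension $\varphi_K$ of $y_{\mathcal{B}}$ along $K$ (Proposition \ref{admequiv}(2)), so $R_{y_{\mathcal{B}}\cdot L}\cdot\varphi_K$ exhibits $R_{y_{\mathcal{B}}\cdot L}\cdot R_K$ as a left extension of $R_{y_{\mathcal{B}}\cdot L}\cdot y_{\mathcal{B}}\cong R_L$ along $K$; pasting with $\varphi_L$ and invoking the pasting lemma once more exhibits $R_{y_{\mathcal{B}}\cdot L}\cdot R_K$ as a left extension of $y_{\mathcal{A}}$ along $K\cdot L$. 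What then remains is to check that the $2$-cell produced this way is the particular pasting of $\varphi_{y_{\mathcal{B}}\cdot L}$ with $\varphi_K$ appearing in axiom (c); I expect this last step, which is pure $2$-cell bookkeeping relying on the explicit exhibiting $2$-cells recorded in Proposition \ref{admequiv} and on uniqueness of left extensions to absorb the comparison isomorphisms, to be the main obstacle in turning this plan into a complete proof.
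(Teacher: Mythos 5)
Your proposal is correct and follows essentially the same route as the paper: the data and axiom (b) are read off from Definition \ref{defkzdoctrine}, Proposition \ref{admequiv} and Remark \ref{defvarphi}; the absolute left lifting is obtained from the same three-step bijection (full faithfulness of $y_{\mathcal{B}}$, the isomorphism $y_{L}$, and the adjunction $\textnormal{lan}_{L}\dashv\textnormal{res}_{L}$); and axiom (c) follows because $\textnormal{res}_{L}$, as a $P$-homomorphism, respects the left extension along the $P$-admissible $K$, after which the pasting lemma finishes the argument. The only difference is presentational: you flag the bookkeeping identifying $R_{y_{\mathcal{B}}\cdot L}$ with $\textnormal{res}_{L}$ up to canonical isomorphism, which the paper silently absorbs by working with $\textnormal{res}_{L}$ and the pasting of $\varphi_{L}$ with $c_{R_{L}}$ directly.
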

\begin{proof}
We need only check that: 

\emph{(1) $\varphi_{L}$ exhibits $L$ as an absolute left lifting.}
Thus, we must exhibit a natural bijection between 2-cells $L\cdot W\to H$
and 2-cells $y_{\mathcal{A}}\cdot W\to R_{L}\cdot H$ for 1-cells
$W\colon\mathcal{D}\to\mathcal{A}$ and $H\colon\mathcal{D}\to\mathcal{B}$
as in the diagram
\[
\xymatrix@=1em{\mathcal{D}\ar[rr]^{W}\ar@/_{1pc}/[rdrd]_{H} & \; & \mathcal{A}\ar[rr]^{y_{\mathcal{A}}}\ar[dd]_{L} &  & P\mathcal{A}\\
 & \;\ar@{}[u]|-{\Downarrow\alpha} & \; & \;\ar@{}[ul]|-{\Downarrow\varphi_{L}}\\
 &  & \mathcal{B}\ar[dd]_{y_{\mathcal{B}}}\ar[ruru]_{R_{L}}\\
 &  &  & \;\ar@{}[ul]|-{\Downarrow c_{R_{L}}}\\
 &  & P\mathcal{B}\ar@/_{1pc}/[ruruuu]_{\textnormal{res}_{L}}
}
\]
Such a natural bijection is given by the correspondence\def\fCenter{\ \rightarrow\ } 
\def\ScoreOverhang{2pt}
\settowidth{\rhs}{$\textnormal{res}_{L}\cdot y_{\mathcal{B}}\cdot H$} 
\settowidth{\lhs}{$\textnormal{lan}_{L}\cdot y_{\mathcal{A}}\cdot W$}
\begin{prooftree}
\Axiom$\makebox[\lhs][r]{$L\cdot W$} \fCenter \makebox[\rhs][l]{$H$}$
\RightLabel{$\qquad y_{\mathcal{B}}\textnormal{ fully faithful}$} 
\UnaryInf$\makebox[\lhs][r]{$y_{\mathcal{B}}\cdot L\cdot W$} \fCenter \makebox[\rhs][l]{$y_{\mathcal{B}}\cdot H$}$ 
\RightLabel{$\qquad \textnormal{lan}_{L}\cdot y_{\mathcal{A}}\cong y_{\mathcal{B}}\cdot L$}
\UnaryInf$\makebox[\lhs][r]{$\textnormal{lan}_{L}\cdot y_{\mathcal{A}}\cdot W$} \fCenter \makebox[\rhs][l]{$y_{\mathcal{B}}\cdot H$}$ 
\RightLabel{$\qquad \textnormal{lan}_{L}\dashv\textnormal{res}_{L}$}
\UnaryInf$\makebox[\lhs][r]{$y_{\mathcal{A}}\cdot W$} \fCenter \makebox[\rhs][l]{$\textnormal{res}_{L}\cdot y_{\mathcal{B}}\cdot H$}$ 
\RightLabel{$\qquad R_{L}:=\textnormal{res}_{L}\cdot y_{\mathcal{B}}$}
\UnaryInf$\makebox[\lhs][r]{$y_{\mathcal{A}}\cdot W$} \fCenter \makebox[\rhs][l]{$R_{L}\cdot H$}$ 
\end{prooftree}and the 2-cell exhibiting this absolute left lifting is easily seen
to be the 2-cell as given in Remark \ref{defvarphi} by following
the above bijection.

\emph{(2) $\textnormal{res}_{L}\cdot R_{K}$ is a left extension.}
Considering the diagram
\[
\xymatrix@=1em{P\mathcal{A} &  & P\mathcal{B}\ar[ll]_{\textnormal{res}_{L}}\ar@{}[dr]|-{\stackrel{\varphi_{K}}{\implies}} &  & \mathcal{C}\ar[ll]_{R_{K}}\\
 & \;\ar@{}[ru]|-{\stackrel{c_{R_{L}}}{\implies}}\ar@{}[dl]|-{\stackrel{\varphi_{L}}{\implies}\quad} &  & \;\\
\mathcal{A}\ar[uu]^{y_{\mathcal{A}}}\ar[rr]_{L} &  & \mathcal{B}\ar[rruu]_{K}\ar[uu]_{y_{\mathcal{B}}}\ar[ulul]^{R_{L}}
}
\]
we first note that $\textnormal{res}_{L}\cdot R_{K}$ is a left extension
of $R_{L}$ along $K$ since $K$ is \emph{$P$-}admissible. We then
apply the pasting lemma for left extensions to see the outside diagram
also exhibits $\textnormal{res}_{L}\cdot R_{K}$ as a left extension. \end{proof}
\begin{rem}
We observe that to ask that $\textnormal{res}_{L}\cdot R_{K}$ be
a left extension in the diagram above for every $P$-admissible $L$
and $K$, is to ask by the pasting lemma that the pasting of $\varphi_{K}$
and $c_{R_{L}}$ exhibit $\textnormal{res}_{L}\cdot R_{K}$ as a left
extension. As $c_{R_{L}}$ is invertible, this is to say that $\textnormal{res}_{L}$
respects every left extension arising from admissibility. This is
equivalent to asking $\textnormal{res}_{L}$ be a $P$-homomorphism.
\end{rem}

\begin{rem}
We note here that we do not necessarily have the right ideal property.
Indeed given a KZ doctrine on a 2-category every identity arrow is
admissible, and so the right ideal property would require all arrows
into all objects being admissible (that is all arrows being admissible).
This fails for example with the identity KZ doctrine on any 2-category
$\mathscr{C}$ which contains an arrow $L$ with no right adjoint.
\end{rem}

\begin{rem}
Given an object $\mathcal{A}\in\mathscr{C}$ with a $P$-admissible
generalized element $a\colon\mathcal{S}\to\mathcal{A}$ we have a
version of the Yoneda lemma in the sense that we have bijections\def\fCenter{\ \rightarrow\ } 
\def\ScoreOverhang{2pt}
\settowidth{\rhs}{$\textnormal{res}_{a}\cdot K$} 
\settowidth{\lhs}{$\textnormal{lan}_{a}\cdot y_{\mathcal{S}}$}
\begin{prooftree}
\Axiom$\makebox[\lhs][r]{$y_{\mathcal{A}}\cdot a$} \fCenter \makebox[\rhs][l]{$K$}$
\RightLabel{$\qquad \textnormal{lan}_{a}\cdot y_{\mathcal{S}}\cong y_{\mathcal{A}}\cdot a$} 
\UnaryInf$\makebox[\lhs][r]{$\textnormal{lan}_{a}\cdot y_{\mathcal{S}}$} \fCenter \makebox[\rhs][l]{$K$}$
\RightLabel{$\qquad \textnormal{lan}_{a}\dashv\textnormal{res}_{a}$}
\UnaryInf$\makebox[\lhs][r]{$y_{\mathcal{S}}$} \fCenter \makebox[\rhs][l]{$\textnormal{res}_{a}\cdot K$}$ 
\end{prooftree}for generalized elements $K\colon\mathcal{S}\to P\mathcal{A}$. In
the case where $P$ is the usual free small cocompletion KZ doctrine
on locally small categories and $\mathcal{S}=\mathbf{1}$ is the terminal
category, maps $y_{\mathcal{S}}\to\textnormal{res}_{a}\cdot K$ are
elements of $\textnormal{res}_{a}\cdot K$ (which may be viewed as
$K$ evaluated at $a$). 
\end{rem}
The purpose of the following is to give an example in which absolute
left liftings (also known as relative adjunctions or partial adjunctions)
are preserved\footnote{In this case respected by the KZ pseudomonad resulting from the KZ doctrine as in \cite{marm2012}.}.
Also, the following proposition does not require locally fully faithfulness,
whereas Theorem \ref{admabll} does. 
\begin{prop}
Suppose we are given a KZ doctrine $\left(P,y\right)$ on a 2-category
$\mathscr{C}$. Then for every \emph{$P$-}admissible 1-cell $L\colon\mathcal{A}\to\mathcal{B}$
as on the left,
\[
\xymatrix@=1em{\mathcal{B}\ar@{->}[rr]^{R_{L}} &  & P\mathcal{A}\ar@{}[ld]|-{\stackrel{\varphi_{L}}{\Longleftarrow}} &  &  &  & P\mathcal{B}\ar@{->}[rr]^{PR_{L}} &  & P^{2}\mathcal{A}\ar@{}[ld]|-{\stackrel{P\varphi_{L}}{\Longleftarrow}}\\
 & \; &  &  &  &  &  & \;\\
 &  & \mathcal{A}\ar[uu]_{y_{\mathcal{A}}}\ar[ulul]^{L} &  &  &  &  &  & P\mathcal{A}\ar[uu]_{Py_{\mathcal{A}}}\ar[ulul]^{PL}
}
\]
the 2-cell $P\varphi_{L}$ as on the right (in which we have suppressed
the pseudofunctoriality constraints) exhibits $PL$ as an absolute
left lifting of $Py_{\mathcal{A}}$ through $PR_{L}$.\end{prop}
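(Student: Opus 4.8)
The plan is to run the argument of part (1) of Theorem~\ref{admabll} "one level up": we exhibit the required natural bijection of $2$-cells and then recover $P\varphi_{L}$ as its exhibiting $2$-cell by evaluating the bijection at identities. Concretely, fix $1$-cells $\overline{W}\colon\mathcal{X}\to P\mathcal{A}$ and $\overline{H}\colon\mathcal{X}\to P\mathcal{B}$; we must produce a natural bijection between $2$-cells $PL\cdot\overline{W}\to\overline{H}$ and $2$-cells $Py_{\mathcal{A}}\cdot\overline{W}\to PR_{L}\cdot\overline{H}$, and check it is pasting with $P\varphi_{L}$.

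We collect the ingredients. Since $L$ is $P$-admissible, Proposition~\ref{admequiv} gives the adjunction $PL\dashv\textnormal{res}_{L}$ with unit $\eta$, and by Remark~\ref{defvarphi} we have $R_{L}=\textnormal{res}_{L}\cdot y_{\mathcal{B}}$ with $\varphi_{L}=\left(\textnormal{res}_{L}\cdot y_{L}\right)\circ\left(\eta\cdot y_{\mathcal{A}}\right)$, where $y_{L}=c_{y_{\mathcal{B}}\cdot L}^{-1}\colon PL\cdot y_{\mathcal{A}}\to y_{\mathcal{B}}\cdot L$ is invertible. As $P$ (via $\textnormal{lan}$) is a pseudofunctor \cite[Theorem~4.1]{marm2012}, applying it preserves this adjunction, yielding $P^{2}L\dashv P\textnormal{res}_{L}$ with unit $P\eta$ up to pseudofunctoriality constraints; it also sends $R_{L}$ to $PR_{L}\cong P\textnormal{res}_{L}\cdot Py_{\mathcal{B}}$ and, modulo the same constraints, sends $\varphi_{L}$ to the pasting $P\varphi_{L}=\left(P\textnormal{res}_{L}\cdot (y_{L})^{P}\right)\circ\left(P\eta\cdot Py_{\mathcal{A}}\right)$, where $(y_{L})^{P}\colon P^{2}L\cdot Py_{\mathcal{A}}\to Py_{\mathcal{B}}\cdot PL$ is $Py_{L}$ conjugated by the relevant constraints. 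The final ingredient is that $Py_{\mathcal{B}}$ is fully faithful; this holds for any KZ doctrine (the unit of $Py_{\mathcal{B}}\dashv\mu_{\mathcal{B}}$ is invertible, equivalently one invokes the left unit law of the associated KZ pseudomonad, cf.\ \cite{marm1997}). This is precisely where the hypothesis of local full faithfulness, needed in Theorem~\ref{admabll}, becomes automatic and so can be dropped.

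The bijection is now assembled exactly as in Theorem~\ref{admabll}(1): $2$-cells $PL\cdot\overline{W}\to\overline{H}$ correspond to $2$-cells $Py_{\mathcal{B}}\cdot PL\cdot\overline{W}\to Py_{\mathcal{B}}\cdot\overline{H}$ since $Py_{\mathcal{B}}$ is fully faithful, then to $2$-cells $P^{2}L\cdot Py_{\mathcal{A}}\cdot\overline{W}\to Py_{\mathcal{B}}\cdot\overline{H}$ along the isomorphism $(y_{L})^{P}$, then to $2$-cells $Py_{\mathcal{A}}\cdot\overline{W}\to P\textnormal{res}_{L}\cdot Py_{\mathcal{B}}\cdot\overline{H}$ by the adjunction $P^{2}L\dashv P\textnormal{res}_{L}$, and finally to $2$-cells $Py_{\mathcal{A}}\cdot\overline{W}\to PR_{L}\cdot\overline{H}$ by $PR_{L}\cong P\textnormal{res}_{L}\cdot Py_{\mathcal{B}}$. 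Naturality of each step is immediate, and tracing $\textnormal{id}$ backwards through the composite (exactly as in the closing paragraph of Part~2 of Proposition~\ref{admequiv} and of Theorem~\ref{admabll}(1)) produces the pasting $\left(P\textnormal{res}_{L}\cdot (y_{L})^{P}\right)\circ\left(P\eta\cdot Py_{\mathcal{A}}\right)$, which is $P\varphi_{L}$; hence $P\varphi_{L}$ exhibits $PL$ as an absolute left lifting of $Py_{\mathcal{A}}$ through $PR_{L}$.

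I expect the only real work to be the constraint bookkeeping in the last two steps — verifying that the coherence isomorphisms $PR_{L}\cong P\textnormal{res}_{L}\cdot Py_{\mathcal{B}}$ and $(y_{L})^{P}$ are precisely those obtained by applying the pseudofunctor $P$ to the pasting defining $\varphi_{L}$, so that the assembled bijection is genuinely "paste with $P\varphi_{L}$" on the nose rather than merely up to isomorphism. Given the coherence theorem for pseudofunctors this is routine, but it is the step deserving care.
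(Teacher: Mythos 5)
Your proposal is correct and follows essentially the same route as the paper: the identical five-step chain of natural bijections (full faithfulness of $Py_{\mathcal{B}}$, the isomorphism $P$ applied to $y_{L}$, the adjunction $P^{2}L\dashv P\textnormal{res}_{L}$, and the definition $R_{L}=\textnormal{res}_{L}\cdot y_{\mathcal{B}}$), with the exhibiting $2$-cell recovered by tracing the identity through. You in fact spell out two points the paper leaves implicit — that $Py_{\mathcal{B}}$ is automatically fully faithful via $Py_{\mathcal{B}}\dashv\mu_{\mathcal{B}}$ (which is why local full faithfulness can be dropped), and that the adjunction used is the image of $PL\dashv\textnormal{res}_{L}$ under the pseudofunctor $P$ — both of which are accurate.
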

\begin{proof}
Without loss of generality, we define $\varphi_{L}$ as in Remark
\ref{defvarphi}. We then have the sequence of natural bijections\def\fCenter{\ \rightarrow\ } 
\def\ScoreOverhang{2pt}
\settowidth{\rhs}{$P\textnormal{res}_{L}\cdot Py_{\mathcal{B}}\cdot H$} 
\settowidth{\lhs}{$P^{2}L\cdot Py_{\mathcal{A}}\cdot W$}
\begin{prooftree}
\Axiom$\makebox[\lhs][r]{$PL\cdot W$} \fCenter \makebox[\rhs][l]{$H$}$
\RightLabel{$\qquad Py_{\mathcal{B}}\textnormal{ fully faithful}$} 
\UnaryInf$\makebox[\lhs][r]{$Py_{\mathcal{B}}\cdot PL\cdot W$} \fCenter \makebox[\rhs][l]{$Py_{\mathcal{B}}\cdot H$}$ 
\RightLabel{$\qquad y_{\mathcal{B}}\cdot L\cong PL\cdot y_{\mathcal{A}}$}
\UnaryInf$\makebox[\lhs][r]{$P^{2}L\cdot Py_{\mathcal{A}}\cdot W$} \fCenter \makebox[\rhs][l]{$Py_{\mathcal{B}}\cdot H$}$ 
\RightLabel{$\qquad PL\dashv\textnormal{res}_{L}$}
\UnaryInf$\makebox[\lhs][r]{$Py_{\mathcal{A}}\cdot W$} \fCenter \makebox[\rhs][l]{$P\textnormal{res}_{L}\cdot Py_{\mathcal{B}}\cdot H$}$ 
\RightLabel{$\qquad R_{L}:=\textnormal{res}_{L}\cdot y_{\mathcal{B}}$}
\UnaryInf$\makebox[\lhs][r]{$Py_{\mathcal{A}}\cdot W$} \fCenter \makebox[\rhs][l]{$PR_{L}\cdot H$}$ 
\end{prooftree}for 1-cells $W$ into $P\mathcal{A}$. Following the bijection we
see that the absolute left lifting is exhibited by $P\varphi_{L}$,
suppressing the pseudofunctoriality constraints.
\end{proof}
Some observations made in ``Yoneda structures'' \cite{yonedastructures}
may be seen more directly in this setting of a KZ doctrine. For example
Street and Walters defined an admissible morphism $L$ (in the setting
of a Yoneda structure) to be fully faithful when the 2-cell $\varphi_{L}$
is invertible (which agrees with a representable notion of fully faithfulness,
that is fully faithfulness defined via the absolute left lifting property,
when axiom (d) is satisfied). Here we see this in the context of a
(locally fully faithful) KZ doctrine.
\begin{prop}
\label{phiinv} Suppose we are given a KZ doctrine $\left(P,y\right)$
on a 2-category $\mathscr{C}$, and a \emph{$P$-}admissible 1-cell
$L\colon\mathcal{A}\to\mathcal{B}$ 
\[
\xymatrix@=1em{\mathcal{B}\ar@{->}[rr]^{R_{L}} &  & P\mathcal{A}\ar@{}[ld]|-{\stackrel{\varphi_{L}}{\Longleftarrow}}\\
 & \;\\
 &  & \mathcal{A}\ar[uu]_{y_{\mathcal{A}}}\ar[ulul]^{L}
}
\]
with a left extension $R_{L}$ as in the above diagram. Then the exhibiting
2-cell $\varphi_{L}$ is invertible if and only if $PL:=\textnormal{lan}_{L}$
is fully faithful. \end{prop}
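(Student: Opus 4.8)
The plan is to reduce everything to a property of the unit of the adjunction $PL = \textnormal{lan}_L \dashv \textnormal{res}_L$ produced by Lemma~\ref{lanresran}. Write $\eta\colon\textnormal{id}_{P\mathcal{A}}\to\textnormal{res}_L\cdot PL$ for this unit. First I would invoke the classical fact that a left adjoint $F\dashv G$ is fully faithful (representably, i.e.\ after applying each $\mathscr{C}(X,-)$) precisely when its unit is invertible; applied to $PL\dashv\textnormal{res}_L$ this says that $PL$ is fully faithful if and only if $\eta$ is invertible. Thus it suffices to prove that $\varphi_L$ is invertible if and only if $\eta$ is.

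Next I would unwind the construction of $\varphi_L$ from Remark~\ref{defvarphi}: there $\varphi_L$ is exhibited as the composite
\[
y_{\mathcal{A}}\xrightarrow{\ \eta\cdot y_{\mathcal{A}}\ }\textnormal{res}_L\cdot PL\cdot y_{\mathcal{A}}\xrightarrow{\ \textnormal{res}_L\cdot y_L\ }\textnormal{res}_L\cdot y_{\mathcal{B}}\cdot L=R_L\cdot L,
\]
where $y_L=c_{y_{\mathcal{B}}\cdot L}^{-1}$ is invertible by Definition~\ref{defkzdoctrine}, hence so is $\textnormal{res}_L\cdot y_L$. Therefore $\varphi_L$ is invertible if and only if $\eta\cdot y_{\mathcal{A}}$ is, and the proposition is reduced to the statement that $\eta$ is invertible if and only if $\eta\cdot y_{\mathcal{A}}$ is.

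One implication of this last equivalence is trivial, so the real point is that whiskering by $y_{\mathcal{A}}$ \emph{reflects} invertibility of $\eta$. For this I would use that $\textnormal{id}_{P\mathcal{A}}$ and $\textnormal{res}_L\cdot PL$ are both $P$-homomorphisms out of $P\mathcal{A}$ --- the second because $PL$ is a $P$-homomorphism and $\textnormal{res}_L$ is one by Part~1 of the proof of Proposition~\ref{admequiv} --- and hence each is the left extension of its own restriction along $y_{\mathcal{A}}$, exhibited by an identity $2$-cell (for $\textnormal{id}_{P\mathcal{A}}=\overline{y_{\mathcal{A}}}$ this is axiom~(a) of Definition~\ref{defkzdoctrine}). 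The universal property of these left extensions says precisely that whiskering with $y_{\mathcal{A}}$ is a bijection from $2$-cells $\textnormal{id}_{P\mathcal{A}}\to M$ to $2$-cells $y_{\mathcal{A}}\to M\cdot y_{\mathcal{A}}$, and likewise with $\textnormal{res}_L\cdot PL$ in place of $\textnormal{id}_{P\mathcal{A}}$. So an inverse $\beta'$ of $\eta\cdot y_{\mathcal{A}}$ lifts to a unique $2$-cell $\beta\colon\textnormal{res}_L\cdot PL\to\textnormal{id}_{P\mathcal{A}}$ with $\beta\cdot y_{\mathcal{A}}=\beta'$; the composites $\beta\circ\eta$ and $\eta\circ\beta$ restrict along $y_{\mathcal{A}}$ to identities, so by the uniqueness clauses of the above bijections they are themselves identities, and hence $\eta$ is invertible.

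I expect the only fiddly part to be this last step, namely being careful about which identity $2$-cells exhibit the relevant left extensions so that the universal properties are applied to the right $2$-cells; the two preliminary reductions are purely formal, and no local full faithfulness of $P$ is needed.
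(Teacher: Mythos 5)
Your proof is correct and follows essentially the same route as the paper: both reduce to the standard fact that $PL$ is fully faithful iff the unit $\eta$ of $\textnormal{lan}_L\dashv\textnormal{res}_L$ is invertible, and then transfer invertibility between $\eta$ and $\varphi_L$ using Remark \ref{defvarphi} together with the left-extension universal properties along $y_{\mathcal{A}}$. The paper packages the key step as defining $\eta^{\ast}$ as the unique 2-cell corresponding to $\varphi_L^{-1}$ under that universal property; your formulation, that whiskering by $y_{\mathcal{A}}$ reflects invertibility between $P$-homomorphisms, is the same argument with the "easy calculation" spelled out.
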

\begin{proof}
We use the well known fact that the left adjoint of an adjunction
is fully faithful precisely when the unit is invertible. Now, given
that $\varphi_{L}$ is invertible we may define our 2-cell $\eta^{\ast}$
as the unique solution to 
\[
\xymatrix@=1em{ &  &  &  &  &  & \;\ar@{}[d]|-{\Uparrow\eta^{\ast}}\\
P\mathcal{A} &  & P\mathcal{A}\ar@{}[ld]|-{\stackrel{\varphi_{L}^{-1}}{\implies}}\ar@{->}[ll]_{\textnormal{id}_{P\mathcal{A}}} &  & P\mathcal{A} &  & P\mathcal{B}\ar@{}[rrdd]|-{\Uparrow c_{y_{\mathcal{B}}\cdot L}}\ar[ll]_{\textnormal{res}_{L}} &  & P\mathcal{A}\ar[ll]_{PL}\ar@/_{2pc}/[llll]_{\textnormal{id}_{P\mathcal{A}}}\\
 & \mathcal{B}\ar[ul]^{R_{L}} &  & = &  & \;\ar@{}[ur]|-{\Uparrow c_{R_{L}}}\\
 &  & \mathcal{A}\ar[uu]_{y_{\mathcal{A}}}\ar[ul]^{L} &  &  &  & \mathcal{B}\ar[uu]^{y_{\mathcal{B}}}\ar[ulul]^{R_{L}} &  & \mathcal{A}\ar[uu]_{y_{\mathcal{A}}}\ar[ll]^{L}
}
\]
That $\eta$ is the inverse of $\eta^{\ast}$ follows from an easy
calculation using Remark \ref{defvarphi}. Conversely, if the unit
$\eta$ is invertible then so is $\varphi_{L}$ by Remark \ref{defvarphi}.\end{proof}
\begin{rem}
If we define a map $L$ to be $P$-fully faithful when $PL$ is fully
faithful, then as a consequence of Proposition \ref{admequiv} (Part
2) and Proposition \ref{phiinv} we see that for any $P$-admissible
map $L$, this $L$ is $P$-fully faithful if and only if every left
extension along $L$ into a $P$-cocomplete object is exhibited by
an invertible 2-cell.
\end{rem}
In the following remark we compare $PL$ being fully faithful with
$L$ being fully faithful, and point out sufficient conditions for
these notions to agree.
\begin{rem}
Note that if $PL$ is fully faithful then $L$ is fully faithful assuming
$P$ is locally fully faithful, as $y$ is pseudonatural. Conversely
if $L$ is fully faithful, then (supposing our corresponding left
extension $R_{L}$ is pointwise) the exhibiting 2-cell is invertible
\cite[Prop. 2.22]{weberyoneda}, equivalent to $PL$ being fully faithful
by the above. This converse may also be seen when the KZ doctrine
is locally fully faithful and good (meaning axiom (d) is satisfied
for $P$-admissible maps) as we can use the argument of \cite[Prop. 9]{yonedastructures}.
However, as we now see, this converse need not hold in general.

An example in which $L$ is fully faithful but $PL$ is not is given
as follows. Take $\mathcal{A}$ to be the 2-category containing the
two objects $0,1$ and two non-trivial 1-cells $x,y\colon0\to1$,
and take $\mathcal{B}$ to be the same but with an additional 2-cell
$\alpha\colon x\to y$. Define $L$ as the inclusion of $\mathcal{A}$
into $\mathcal{B}$. Then for the free $\mathbf{Cat}$-cocompletion
of $\mathcal{A}$ given by $y_{\mathcal{A}}\colon\mathcal{A}\to\left[\mathcal{A}^{\textnormal{op}},\mathbf{Cat}\right]$
we note that $y_{\mathcal{A}}$ and $R_{L}\cdot L$ are not isomorphic,
and so the 2-cell $\varphi_{L}$ is not invertible meaning $PL$ is
not fully faithful (despite $L$ being fully faithful).
\end{rem}

\section{Future Work}

We have seen that the notions of pseudo algebras and admissibility
for a given KZ doctrine, and KZ doctrines themselves, may be expressed
in terms of left extensions. In a soon forthcoming paper we show that
pseudodistributive laws over a KZ doctrine may be simply expressed
entirely in terms of left extensions and admissibility, allowing us
to generalize some results of Marmolejo and Wood \cite{marm2012}.

\section{Acknowledgments}

The author would like to thank his supervisor as well as the anonymous
referee for their helpful feedback. In addition, the support of an
Australian Government Research Training Program Scholarship is gratefully
acknowledged.

\bibliographystyle{siam}
\bibliography{references}

\end{document}